\begin{document}
\baselineskip=14pt

\makeatletter\@addtoreset{equation}{section}
\makeatother\def\theequation{\thesection.\arabic{equation}}

\newcommand{\todo}[1]{{\tt [TODO: {#1}]}}

\newtheorem{defin}{Definition}[section]
\newtheorem{teo}{Theorem}[section]
\newtheorem{Prop}[teo]{Proposition}
\newtheorem{ml}{Main Lemma}
\newtheorem{con}{Conjecture}
\newtheorem{cond}{Condition}
\newtheorem{conj}{Conjecture}
\newtheorem{prop}[teo]{Proposition}
\newtheorem{lem}[teo]{Lemma}
\newtheorem{rmk}[teo]{Remark}
\newtheorem{cor}[teo]{Corollary}
\newtheorem{ass}[teo]{Assumption}

\newcommand{\be}{\begin{equation}}
\newcommand{\ee}{\end{equation}}
\newcommand{\ben}{\begin{eqnarray}}
\newcommand{\benn}{\begin{eqnarray*}}
\newcommand{\een}{\end{eqnarray}}
\newcommand{\eenn}{\end{eqnarray*}}
\newcommand{\bp}{\begin{prop}}
\newcommand{\ep}{\end{prop}}
\newcommand{\bt}{\begin{teo}}
\newcommand{\et}{\end{teo}}
\newcommand{\bcor}{\begin{cor}}
\newcommand{\ecor}{\end{cor}}
\newcommand{\bcon}{\begin{con}}
\newcommand{\econ}{\end{con}}
\newcommand{\bcond}{\begin{cond}}
\newcommand{\econd}{\end{cond}}
\newcommand{\br}{\begin{rmk}}
\newcommand{\er}{\end{rmk}}
\newcommand{\bl}{\begin{lem}}
\newcommand{\el}{\end{lem}}
\newcommand{\bit}{\begin{itemize}}
\newcommand{\eit}{\end{itemize}}
\newcommand{\bd}{\begin{defin}}
\newcommand{\ed}{\end{defin}}
\newcommand{\bpr}{\begin{proof}}
\newcommand{\epr}{\end{proof}}

\newcommand{\Z}{{\mathbb Z}}
\newcommand{\R}{{\mathbb R}}
\newcommand{\E}{{\mathbb E}}
\newcommand{\C}{{\mathbb C}}
\renewcommand{\P}{{\mathbb P}}
\newcommand{\N}{{\mathbb N}}

\newcommand{\Bi}{{\cal B}}
\newcommand{\Si}{{\cal S}}
\newcommand{\Ti}{{\cal T}}
\newcommand{\Wi}{{\cal W}}
\newcommand{\Yi}{{\cal Y}}
\newcommand{\Hi}{{\cal H}}
\newcommand{\Fi}{{\cal F}}
\newcommand{\Zi}{{\cal Z}}

\newcommand{\eps}{\epsilon}

\newcommand{\nn}{\nonumber}

\newcommand{\pa}{\partial}
\newcommand{\ffrac}[2]{{\textstyle\frac{{#1}}{{#2}}}}
\newcommand{\dif}[1]{\ffrac{\partial}{\partial{#1}}}
\newcommand{\diff}[1]{\ffrac{\partial^2}{{\partial{#1}}^2}}
\newcommand{\difif}[2]{\ffrac{\partial^2}{\partial{#1}\partial{#2}}}

\newcommand{\asto}[1]{\underset{{#1}\to\infty}{\longrightarrow}}
\newcommand{\Asto}[1]{\underset{{#1}\to\infty}{\Longrightarrow}}
\newcommand{\astoo}[1]{\underset{{#1}\to 0}{\longrightarrow}}
\newcommand{\Astoo}[1]{\underset{{#1}\to 0}{\Longrightarrow}}

\newcommand{\treeroot}{\varnothing}

\newcommand\NOTE[1]{{\color{blue}\sf [#1]}}
\newcommand\NOTEM[1]{\marginpar{\raggedright\color{blue}\sf\small #1}}

\title{Low-dimensional lonely branching random walks die out}
\author{Matthias Birkner$^{\,1}$, Rongfeng Sun$^{\,2}$}
\date{\today}
\maketitle

\footnotetext[1]{Institut f\"ur Mathematik, Johannes-Gutenberg-Universit\"at Mainz, Staudingerweg 9, 55099 Mainz, Germany. Email:
birkner@mathematik.uni-mainz.de}

\footnotetext[2]{Department of Mathematics, National University of Singapore, 10 Lower Kent Ridge Road, 119076 Singapore. Email:
matsr@nus.edu.sg}

\begin{abstract}
  The lonely branching random walks on $\Z^d$ is an interacting
  particle system where each particle moves as an independent random
  walk and undergoes critical binary branching when it is alone.  We
  show that 
  if the symmetrized walk is recurrent, lonely branching random walks
  die out locally. Furthermore, the same result holds if additional
  branching is allowed when the walk is not alone.
\end{abstract}

\section{Model and result}

We consider systems of (critical binary) \emph{lonely branching random
  walks}: Particles move as independent continuous-time irreducible
random walks on $\Z^d$ with jump rate $1$, jumps are taken according
to a probability kernel $p_{xy}=p_{y-x}$, $x,y\in\Z^d$. In addition,
whenever a particle is alone at its site, it undergoes critical binary
branching at rate $\gamma$. We will denote the particle configuration
at time $t$ by $\eta(t):=(\eta_x(t))_{x\in \Z^d}$, with $\eta_x(t)$
being the number of particles at site $x$ at time $t$. For
$\eta=(\eta_x)_{x\in \Z^d} \in \N_0^{\Z^d}$ and (suitable) test
functions $f : \N_0^{\Z^d}\to \R$, the generator is (formally) given by
\begin{align}
\label{generatorcatalyticbranchers}
 L\,f(\eta) & = \sum_{x, y} \eta_x p_{x y} 
 \big( f(\eta^{x \to y}) - f(\eta) \big) 
 + \gamma \sum_{x} 1_{\{\eta_x=1\}} \frac12 
 \big( f(\eta^{+x}\eta) +  f(\eta^{-x}\eta)- 2f(\eta) 
 \big) . 
\end{align}
where 
\begin{align}
  \label{eq:eta.moves}
  \eta^{x \to y} := \eta+\delta_y-\delta_x, \quad 
  \eta^{+x} := \eta+\delta_x, \quad \eta^{-x} := \eta-\delta_x
\end{align}
(from $\eta$, $\eta^{x \to y}$ arises by moving a particle from $x$ to $y$, 
$\eta^{+x}$ arises by adding a particle at site $x$ and $\eta^{-x}$ arises by removing a particle at $x$).

Using monotonicity and approximations with finite initial conditions, one can 
start the process $(\eta(t))_{t \ge 0}$ from any initial condition $\eta(0) \in \N_0^{\Z^d}$. It is 
then -- analogous to systems of independent random walks -- in principle possible that the system explodes in finite time 
in the sense that the number of particles at some site becomes infinite. However, we will only 
consider (possibly random) initial conditions for which the system is well-defined and locally finite for all times 
(this is amply guaranteed by Assumption~\eqref{eq:boundedintensity} in Thm.~\ref{thm1}). 
We discuss the rigorous construction of the process with pointers to the literature in 
Remark~\ref{rmk:rigorousconstruction} below. 
\begin{ass}
\label{ass:p}
The probability kernel $(p_x)_{x\in\Z^d}$ is irreducible and the
random walk with the symmetrised jump kernel
$\widehat{p}_x := (p_x+p_{-x})/2$ is recurrent.
\end{ass}
Note that if $p$ has finite second moments, Assumption~\ref{ass:p} is equivalent to $d\leq 2$.

\begin{teo} 
  \label{thm1}
  If $p=(p_x)_{x\in\Z^d}$ satisfies Assumption~\ref{ass:p},
  the branching rate $\gamma>0$, and 
  \begin{equation}
    \label{eq:boundedintensity}
    \sup_{x\in\Z^d} \E[\eta_x(0)]<\infty 
  \end{equation}
  holds, then the
  lonely branching random walks die out locally in probability, i.e.,
  \begin{align}
    \label{thm1:eq}
    \lim_{t\to\infty} \P(\eta_x(t)=0) = 1 \quad \text{for all }x \in \Z^d. 
  \end{align}
\end{teo}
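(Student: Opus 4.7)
The plan is to split the argument into two stages: extinction from finite initial configurations, established by a direct martingale argument, and a passage to general initial data with bounded intensity, via monotonicity together with a pair-correlation estimate that exploits the recurrence assumption.

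\textbf{Stage 1 (finite initial configurations).} The total particle count $M_t := \sum_x \eta_x(t)$ is a non-negative integer-valued martingale, since the random walk part is conservative and critical binary branching preserves first moments. Starting from any finite initial configuration, $M_t$ therefore converges almost surely, and being integer-valued the convergence is eventually constant, $M_t \equiv M_\infty$ for all large $t$. But from any non-empty configuration the process has a strictly positive rate of events: random walk jumps occur at rate $M_t\ge 1$ and repeatedly create lonely sites, each of which triggers branching at rate $\gamma$. Hence $M_t$ cannot stabilize at any positive value, so $M_\infty=0$ almost surely, i.e.\ complete extinction in finite time from any finite initial configuration.

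\textbf{Stage 2 (reduction and pair correlations).} The standard graphical representation---independent Poisson clocks for random walk jumps and for the $\pm$ branching attempts at each site, with the lonely condition checked against the current state---makes the dynamics monotone in the initial data, and combined with \eqref{eq:boundedintensity} this permits a reduction, via stochastic domination, to a translation-invariant initial distribution of some intensity $\lambda$. Under translation invariance $\E[\eta_0(t)]\equiv\lambda$, and a direct generator computation shows that the pair correlation $m_2(u,t):=\E[\eta_0(t)\eta_u(t)]$ satisfies
\begin{equation*}
\partial_t m_2(u,t) \;=\; 2\widehat L\, m_2(u,t) \;+\; \bigl(2\lambda \,+\, \gamma\,\P(\eta_0(t)=1)\bigr)\,\delta_{u,0},
\end{equation*}
where $\widehat L$ is the generator of the symmetrized walk. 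The crucial point is that the on-diagonal source is \emph{uniformly bounded} (by $2\lambda+\gamma$), in stark contrast with ordinary critical branching random walks; combined with the fact that the Green's function of $\widehat L$ at the origin is infinite under Assumption~\ref{ass:p}, this forces $m_2(0,t)\to\infty$. The same relation, applied to any would-be non-trivial translation-invariant stationary measure $\nu$, reads $(-2\widehat L)\,m_2(0)=2\lambda+\gamma\,\nu(\eta_0=1)>0$, which admits no finite solution in the recurrent regime and so rules out such a $\nu$ with finite second moment.

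\textbf{Stage 3 (from variance blow-up to local extinction; main obstacle).} The combination of constant mean and divergent second moment reflects the usual ``clumping'' mechanism of critical branching systems in recurrent dimensions---mass concentrates on sparse, highly occupied sites---and, together with tightness of $\eta_0(t)$ (immediate from the bounded mean) and the exclusion of non-trivial invariant limits from Stage 2, should yield $\P(\eta_0(t)\ge 1)\to 0$. I expect the main technical obstacle to lie precisely in the last conversion: the source $\gamma\,\P(\eta_0(t)=1)$ in the pair-correlation equation is itself (a proxy for) the quantity whose smallness controls local extinction, so the argument is self-referential and leaves room for subsequential limits with infinite second moment that do not immediately fall under Stage 2's exclusion. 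Closing the loop rigorously will likely require either a Lyapunov-type functional combining the first moment with a measure of clumping, or a monotone comparison with an auxiliary branching system for which extinction is already known. The robustness noted in the abstract---extinction persists even under additional non-lonely branching---suggests the latter style of argument, driven by the structural combination of a martingale total count, critical branching, and recurrence of the symmetrized walk, rather than by delicately balanced rates.
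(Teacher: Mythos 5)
Your Stage~1 is correct but not the bottleneck: the total-count martingale argument gives extinction from every finite initial configuration in \emph{any} dimension and without Assumption~\ref{ass:p}, so it cannot by itself be what distinguishes recurrent from transient. Your Stage~2 captures the right phenomenon (under recurrence, pair correlations blow up and there is no finite--second--moment translation-invariant equilibrium), and you are candid in Stage~3 about the real gap: the on-diagonal source $\gamma\,\P(\eta_0(t)=1)$ in your pair-correlation equation is precisely a proxy for the quantity you want to drive to zero, so the argument is circular; bounded first moment plus divergent second moment ("clustering") does not by itself yield $\P(\eta_0(t)=0)\to 1$, and subsequential limits with infinite second moment escape your Stage~2 exclusion. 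The auxiliary-system comparison you float is also unlikely to close the gap in the direction you need: lonely branching suppresses branching relative to ordinary critical BRW, hence produces \emph{less} clustering, so local extinction for the lonely system is genuinely harder and is not inherited by domination from the classical result.

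The missing ingredient, and the route the paper actually takes, is the locally size-biased (Palm) distribution. Define $\widehat{\eta}^{(x,T)}$ by $\E[f(\widehat{\eta}^{(x,T)})]=\E[\eta_x(T)f(\eta)]/\E[\eta_x(T)]$. The elementary but decisive identity, obtained by taking $f(\eta)=\mathbf 1_{\{\eta_x(T)\ge 1\}}/\eta_x(T)$, is
\[
\P(\eta_x(T)\ge 1)\;\le\;\Big(\sup_{t\ge0}\E[\eta_x(t)]\Big)\Big(\P\big(1\le\widehat{\eta}^{(x,T)}_x(T)\le K\big)+\tfrac1K\Big),
\]
so it suffices to show $\widehat{\eta}^{(x,T)}_x(T)\to\infty$ in distribution; this is exactly the rigorous conversion of clustering into local extinction that your Stage~3 lacks. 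The paper then represents $(\widehat{\eta}^{(x,T)},X)$ (with $X$ an ancestral path of a uniformly sampled particle at $(x,T)$) as the original dynamics enriched with a selected particle following a conditioned random walk and immortal branching (immigration) along its trajectory; viewed from the selected particle, one obtains a time-homogeneous system $\xi$ with immigration at the origin while the origin is empty. Divergence of $\xi_0(t)$ started from $\xi\equiv 0$ is proved by a dichotomy lemma (monotonicity plus a restart argument: either $\xi_x(t)$ is tight for all $x$ and converges to a stationary law with $\P(\xi_0^{(\infty)}=0)>0$, or $\xi_x(t)\to\infty$ in probability for all $x$) and then Paley--Zygmund applied to the $\xi$ system, where the key moment bounds are $\E[\xi_0(t)]\sim b\gamma\int_0^t\widehat p_0(s)\,ds\to\infty$ if tightness held (with $b=\P(\xi_0^{(\infty)}=0)>0$) and $\E[\xi_0(t)^2]\le 3\gamma^2(\int_0^t\widehat p_0)^2+\gamma\int_0^t\widehat p_0$. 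Note that here the moment computation is applied to $\xi$, not to $\eta$, and the positivity of the source $\P(\xi_0(t)=0)$ is what one \emph{assumes for contradiction}, so the circularity you flagged does not arise. In short: your Stages~1--2 are sound as far as they go, but the proof needs the Palm/size-biasing reduction (a Kallenberg-backbone type construction), which is absent from your proposal and is the essential new idea.
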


\begin{rmk}\rm
  A simple coupling argument (see \cite[Lemma~1, Ch.~2.2]{B03}) shows that
  $\eta$ is a monotone process, thanks to the binary
  branching. Therefore it suffices to prove Theorem~\ref{thm1} under
  the assumption that $\E[\eta_x(0)]$ is constant in $x\in\Z^d$. We
  assume this from now on.
\end{rmk}

\begin{rmk}[Construction of the process and suitable state spaces]\rm
  \label{rmk:rigorousconstruction}
  The Markov process $(\eta(t))_{t\ge 0}$ can be obtained in a fairly straightforward way 
  as a solution to an infinite system of Poisson-process driven stochastic equations, 
  see \cite[Ch.~2.2]{B03} for a rigorous construction. 
  \smallskip
  
  $(\eta(t))_{t \ge 0}$ is locally finite and well defined for any initial
  configuration $\eta(0)$ from
  \begin{align} 
    \label{eq:condIC0}
    \mathbb{S}_\mathrm{max} := \Big\{ \eta \in \N_0^{\Z^d} \colon
    \sum\nolimits_{y \in \Z^d} \eta_y p_{yx}(t) < \infty \; \text{for all } t \ge 0, x \in \Z^d \Big\} 
  \end{align}
  where $(p_{xy}(t))_{x,y\in\Z^d}$ is the transition kernel of the
  random walk at time $t$: Comparison with supercritical binary
  branching random walks (particles split in two at rate $\gamma$)
  shows that then $\E\big[\eta_x(t) | \eta(0)\big] \le e^{\gamma t}
  \sum_y \eta_y(0) p_{yx}(t) < \infty$ for any $x \in \Z^d$, $t \ge
  0$, in particular, there is no
  explosion. Assumption~\eqref{eq:boundedintensity} implies $\eta(0)
  \in \mathbb{S}_\mathrm{max}$ a.s.

  If \eqref{eq:condIC0} is violated for a certain initial condition $\eta(0)$, 
  i.e.\ $\sum_y \eta_y(0) p_{y
    x_0}(t_0)=\infty$ for some $t_0$ and $x_0$, then by
  irreducibility, the system will explode everywhere by time $t_0+$.
  (Note that the number of particles at $x_0$ at time $t'$,
  which did not undergo any branching in the time interval $[0,t']$, is 
  bounded from below by the sum of independent indicators with total mean 
  $e^{-\gamma t'} \sum_y \eta_y(0) p_{y x_0}(t')$.)
  \medskip
  
  For computations involving the generator \eqref{generatorcatalyticbranchers}, it is 
  more convenient to restrict to a smaller set of allowed initial conditions, 
  which is still large enough for the purposes of this note: 
  Pick some 
  reference weight sequence $(w_x)_{x \in \Z^d} \subset 
  (0,\infty)^{\Z^d}$ with the property 
  \begin{align} 
    \label{eq:LSweightproperties}
    \sum\nolimits_{x\in\Z^d} w_x < \infty \quad \text{and} \quad 
    \sum\nolimits_{y \in \Z^d} p_{x y} w_y \le M w_x, \quad x \in \Z^d
  \end{align}
  for some $M < \infty$, which implies $\sum_{y \in \Z^d} p_{x y}(t)
  w_y \le e^{M t} w_x$ for $t \ge 0$, $x \in \Z^d$. 
  [A simple choice, following \cite{LS81}, is $w_x = \sum_{n=0}^\infty
  M^{-n} \sum_y p^{(n)}_{x y} v_y$ for some $M>1$ and a summable and
  strictly positive sequence $(v_x)_{x \in \Z^d} \subset
  (0,\infty)^{\Z^d}$, where $p^{(n)}_{x y}$ denotes entry $(x,y)$ of
  the $n$-th power of $p$.]

  Let 
  \begin{align} 
    \label{def:wnorm}
    \mathbb{S}_w := \big\{ \eta \in \N_0^{\Z^d} \colon || \eta ||_w < \infty \big\}, 
    \quad \text{where }
    || \eta ||_w := \sum\nolimits_{x \in \Z^d}\eta_x w_x .
  \end{align}
  $\mathbb{S}_w$ is (a closed subset of) a weighted $\ell_1$-space, equipped with $||\cdot||_w$ it is a 
  complete and separable metric space; 
  $\mathbb{S}_w \subset \mathbb{S}_\mathrm{max}$ for any such choice of $(w_x)$. 

  Write $\mathrm{Lip}(\mathbb{S}_w)$ for the 
  Lipschitz continuous functions on $\mathbb{S}_w$. It follows from the computations 
  in \cite[Section~2.2]{B03} that for 
  $f \in \mathrm{Lip}(\mathbb{S}_w)$ there exists $c_f < \infty$ such that 
  \begin{align} 
    \label{eq:Lnormbound}
    \big| L f(\eta) \big| \le c_f ||\eta||_w \quad 
    \text{for all } \eta \in \mathbb{S}_w
  \end{align}
  and that $\mathrm{Lip}(\mathbb{S}_w)$ is a core for $L$ from
  \eqref{generatorcatalyticbranchers}. In particular
  (see, e.g., \cite[Lemma~3]{B03}) there is a constant $C=C(w) < \infty$ such
  that
  \begin{equation} 
    \E\big[ ||\eta(t)||_w \big] \le e^{C t} \E\big[ ||\eta(0)||_w \big] 
    \quad \text{for all } t \ge 0
  \end{equation}
  and 
  \begin{equation}
    \label{eq:Epsieta0finite}
    \E\big[ ||\eta(0)||_w \big] < \infty 
  \end{equation}
  implies that $\eta(t) \in \mathbb{S}_w$ for all $t$. 
  Note that \eqref{eq:boundedintensity} implies \eqref{eq:Epsieta0finite}.
\end{rmk}

\subsection{Discussion} 
\label{sect:discussion}

The system \eqref{generatorcatalyticbranchers} is a special case of self-catalytic 
critical binary branching random walks (SCBRW${}_b$) on $\Z^d$ where each 
particle independently performs a random walk with kernel $p$ 
and in addition while there are $k-1$ other particles at its site, 
it splits in two or disappears with rate $b(k)$, where $b : \N_0 \to 
[0,\infty)$ is the branching rate function, i.e., the second 
sum on the right-hand side of 
\eqref{generatorcatalyticbranchers} is replaced by 
$L_{br}^{(b)} f(\eta) = \sum_{x} b(\eta_x) \frac12 \big( f(\eta^{+x}) + f(\eta^{-x})- 2f(\eta) \big)$.
The choice $b=\gamma \mathbf{1}_{\{k=1\}}$ leads to \eqref{generatorcatalyticbranchers}.

By the comparison result from \cite[Thm.~1 and Cor.~1, Ch.~2.7]{B03} 
(a discrete particle analogue of the main result from \cite{CFG96}), 
Theorem~\ref{thm1} implies 
\begin{cor}If $p$ satisfies Assumption~\ref{ass:p} and $\sup_{x\in\Z^d} \E[\eta^{(b)}_x(0)]<\infty$, then
the SCBRW${}_b$ $(\eta^{(b)}(t))_{t\geq 0}$ with branching rate function $b$ will die out locally whenever $b(1)>0$.
\end{cor}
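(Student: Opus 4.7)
The plan is to deduce the corollary directly from Theorem~\ref{thm1} by invoking the comparison result from \cite[Thm.~1 and Cor.~1, Ch.~2.7]{B03} (the discrete-particle analogue of \cite{CFG96}) that the authors have already cited in the paragraph preceding the statement. This is a one-step reduction: the comparison result is the only substantive ingredient and is taken as a black box.

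Concretely, set $\gamma := b(1) > 0$ and let $b_0(k) := \gamma \mathbf{1}_{\{k=1\}}$ be the branching rate function of the lonely branching random walk studied in Theorem~\ref{thm1}. Then $b \ge b_0$ pointwise on $\N_0$: equality holds at $k=1$ by the choice of $\gamma$, while $b_0(k) = 0 \le b(k)$ for every $k \neq 1$. Heuristically, in critical binary branching a pointwise larger rate produces stronger local clustering, and in the recurrent regime of Assumption~\ref{ass:p} this clustering forces local extinction; the comparison result makes this precise, asserting that if $\eta^{(b_0)}$ dies out locally then so does $\eta^{(b)}$, provided both are started from suitably coupled initial conditions.

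To close the argument, start both systems from $\eta^{(b)}(0)$, so that the intensity assumption $\sup_x \E[\eta^{(b_0)}_x(0)] = \sup_x \E[\eta^{(b)}_x(0)] < \infty$ holds for the lonely process, and apply Theorem~\ref{thm1} to obtain $\P(\eta^{(b_0)}_x(t)=0) \to 1$ as $t \to \infty$ for every $x \in \Z^d$. The comparison result then transfers this local extinction to $\eta^{(b)}$, yielding the claim. The only potential obstacle is verifying the precise hypotheses of \cite[Ch.~2.7]{B03} (e.g.\ the choice of state space $\mathbb{S}_w$ from Remark~\ref{rmk:rigorousconstruction} and the formal coupling of the two systems), but these are routine given that the pointwise order $b \ge b_0$ is the natural relation used in that reference.
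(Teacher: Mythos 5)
Your proposal is correct and is essentially the paper's own argument: the corollary is deduced by setting $\gamma = b(1)$, noting $b \ge \gamma\mathbf{1}_{\{k=1\}}$, and invoking the comparison theorem from \cite[Thm.~1 and Cor.~1, Ch.~2.7]{B03} together with Theorem~\ref{thm1}. The paper does not spell out the intermediate steps (choice of $b_0$, transfer of the intensity bound, direction of the comparison), but what you supply is exactly what is implicit there.
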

This confirms \cite[Conjecture~1, Ch.~2.8]{B03}, which was also
formulated by Alison Etheridge (personal communication),
for 
recurrent random walks satisfying Assumption~\ref{ass:p}.
\medskip

The case $b(k) = ck$ for some $c>0$ corresponds 
to classical systems of independent branching random walks (IBRW). For IBRW, 
local extinction in ``low dimensions'', i.e.\ when the underlying symmetrised random walk is 
recurrent, is well known, 
\cite{K77}, 
\cite{D77}, 
\cite{F75}. 
In fact, the low-dimensional 
IBRW exhibit ``clustering'' -- local extinction combined with increasingly rare regions 
of diverging particle density. 
See also \cite{L-MW94} for references and discussion concerning persistence vs.\ local extinction for 
independent branching random walks in various contexts. 

These papers do make use of the independence properties inherent in IBRW (different families 
evolve independently), which is not the case in our system(s). 
In particular, our arguments do not (and can not) rely on explicit computations 
or estimates for Laplace transforms. 

Our proof technique for Theorem~\ref{thm1} is in so far inspired by
\cite{K77} that we show clustering by analysing a suitable stochastic
representation of the Palm distribution. In the context of IBRW and its
relatives, related ``Kallenberg tree'' constructions for critical
spatial systems have been used e.g.\ in \cite{GRW90},
\cite{GW91}, \cite{GRW92}, \cite{GW94} and similarly, ``spine''
constructions for supercritical branching processes have been
considered in the literature, e.g.\ \cite{EK04} and references there
(see also \cite{LPP95} and discussion of references there on p.~1129). 
Arguably, the present manuscript highlights the robustness and usefulness 
of this type of stochastic representation, especially when more analytic 
tools are unavailable because of inter-dependence of different families.
\medskip

Even under Assumption~\ref{ass:p}, one can set up initial conditions
$\eta(0)$ such that \eqref{thm1:eq} and \eqref{eq:boundedintensity}
both fail. For example, take for $p$ symmetric simple random walk on
$\Z^1$ and make $\eta_x(0) \approx e^{c |x|}$ grow to $\infty$ as
$|x|\to\infty$ so that $\eta(0) \in \mathbb{S}_\mathrm{max}$ but the
number of particles which reach $0$ at time $t$ without having
branched before does not converge to $0$ in probability. Obviously,
such initial conditions are not stationary in space and it seems
highly doubtful whether $\eta(t)$ would then converge to an equilibrium
concentrated on $\mathbb{S}_\mathrm{max}$.
Still, while Theorem~\ref{thm1} shows in particular that under Assumption~\ref{ass:p} 
there can be no non-trivial equilibria with finite intensity, it does 
not rule out the possibility of equilibria with infinite intensity.
It is known that this is not the case for IBRW, see \cite{BCG93} 
(there, literally proved for branching Brownian motion and super Brownian motion, 
using comparison arguments for the Laplace transforms). For SCBRW, this question 
remains open at the moment.

When $\widehat{p}$ is transient, there is a family of non-trivial
equilibria, parametrised by the average particle density, analogous to
the case of IBRW, see \cite[Prop.~3]{B03}.
 \medskip

In \cite{BS14}, we considered the following caricature 
of the system $\xi$ from Section~\ref{subsect:relviewpoint},
originally proposed by Anton Wakolbinger: Replace the random walk
special path by a constant path and disallow branching away from the
special path but keep the immigration mechanism along it 
unchanged (``random walks with self-blocking immigration'').
The main results from \cite{BS14} corroborate Theorem~\ref{thm1} in a
quantitative way, and in fact lead to the conjecture that in $d=1$
and assuming that $p$ has finite second moments, 
the typical number of particles at the origin
under the Palm distribution of the lonely branching random walks should diverge like $\log t$ in $d=1$.  However,
undoing the caricature steps to convert our findings into an actual
proof of this conjecture will require new arguments.
\bigskip

\noindent
In Section~\ref{sect:Palm}, we introduce the stochastic representation
of the locally size-biased (or ``Palm'') law of $\eta$; its behaviour
is analysed in Section~\ref{sect:proofthm1}, which completes the proof
of Theorem~\ref{thm1}. 


\section{The locally size-biased process}
\label{sect:Palm}

The key to proving Theorem \ref{thm1} is to study the locally size-biased law of $\eta$, which we introduce below. 

\subsection{The locally size-biased process $\widehat \eta^{(x,T)}$ 
as a main ingredient for the proof of Theorem~\ref{thm1}}

For $x\in\Z^d$ and $T \ge 0$, assume that 
\begin{equation}
  \label{eq:EetaxTfinite}
  \sum_y \E[\eta_y(0)] p_{yx}(T) < \infty.
\end{equation} 
\eqref{eq:EetaxTfinite} follows in particular from the assumption
\eqref{eq:boundedintensity} in Theorem~\ref{thm1} but this is the
``correct'' (and somewhat milder) assumption for the following
construction since the term in \eqref{eq:EetaxTfinite} equals
$\E[\eta_x(T)]$. 
\smallskip

Let $\widehat{\eta}^{(x,T)}:=(\widehat{\eta}^{(x,T)}(t))_{0\leq t\leq T}$ have the locally size-biased (w.r.t.\ $\eta_x(T)$) distribution of $\eta:=(\eta(t))_{0\leq t\leq T}$, i.e.\ 
\begin{equation}\label{lsbias}
\E[f(\widehat{\eta}^{(x,T)})] = \frac{\E[\eta_x(T) f(\eta)]}{\E[\eta_x(T)]}
\end{equation}
for any (say, bounded or non-negative) test function $f$.  
We will show that for every $x\in\Z^d$, 
\begin{align}
\label{eq:sbdiv}
\inf_{K \ge 0} \liminf_{T\to\infty} \P(\widehat{\eta}^{(x,T)}_x(T) \ge K) = 1,
\end{align}
i.e.\ $\widehat{\eta}^{(x,T)}_x(T) \to \infty$ in distribution. Since under 
the assumptions of Theorem~\ref{thm1} (see \cite[Lemma~4a)]{B03}), 
\[
\sup_{t\ge 0} \E[\eta_x(t)] = \sup_{t\ge 0} \sum_y \E[\eta_y(0)] p_{yx}(t) 
\le \sup_y \E[\eta_y(0)] < \infty,
\]
\eqref{eq:sbdiv} implies Theorem~\ref{thm1} by a standard argument. Indeed, by \eqref{lsbias} with $f(\eta)=\frac{1_{\{\eta_x(T)\geq 1\}}}{\eta_x(T)}$,
\begin{align*}
\P(\eta_x(T) \ge 1) 
= \E[\eta_x(T)] \times \E\Bigg[ \frac{1_{\{\widehat{\eta}^{(x,T)}_x(T) \ge 1\}}}{\widehat{\eta}^{(x,T)}_x(T)}  \Bigg] \le \big( \sup_{t\ge 0} \E[\eta_x(t)] \big) \times 
\Big( \P\big(1 \le \widehat{\eta}^{(x,T)}_x (T) \le K\big) + \frac1K \Big)
\end{align*}
for every $K>1$. Taking $T\to\infty$ followed by $K \to \infty$ then implies $\lim_{T\to\infty} \P(\eta_x(T) \ge 1) = 0$.

\subsection{A stochastic representation of $\widehat{\eta}^{(x, T)}$} 
\label{sect:stochrephateta}

\setcounter{figure}{0}
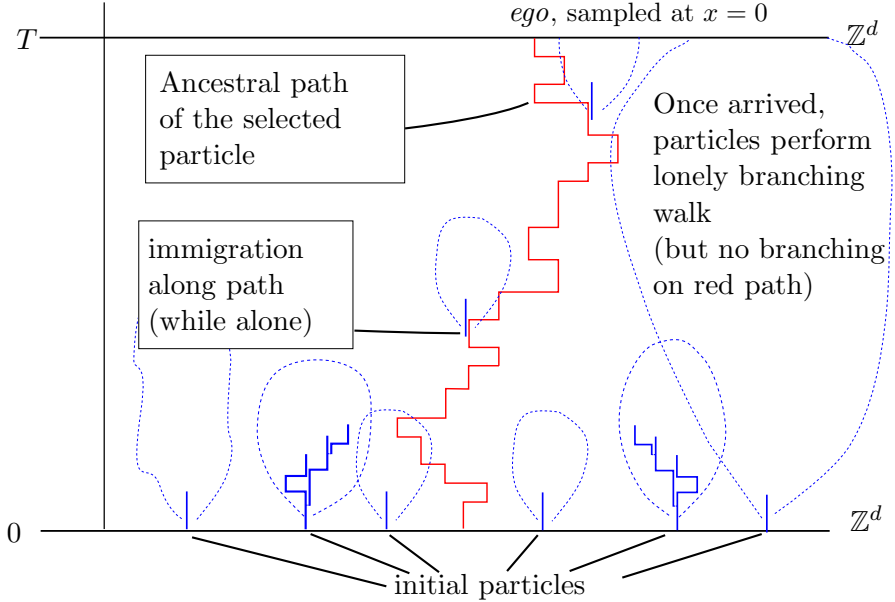
\begin{figure}[tp]
\begin{center}
\definecolor{cff0000}{RGB}{255,0,0}
\definecolor{c0000ff}{RGB}{0,0,255}
\definecolor{cffffff}{RGB}{255,255,255}
\begin{tikzpicture}[y=1.15pt,x=1.15pt,yscale=-1, inner sep=0pt, outer sep=0pt]
\begin{scope}[shift={(-13.52575,-275.93147)}]
  \path[draw=black,line join=miter,line cap=butt,miter limit=4.00,line
    width=0.681pt] (23.6469,450.4656) .. controls (85.7036,450.4656) and
    (280.3914,450.4656) .. (280.3914,450.4656);

  \path[draw=black,line join=miter,line cap=butt,miter limit=4.00,line
    width=0.681pt] (23.3275,288.6313) .. controls (85.3842,288.6313) and
    (280.0720,288.6313) .. (280.0720,288.6313);

  \path[draw=black,line join=miter,line cap=butt,miter limit=4.00,line
    width=0.477pt] (44.3324,449.8572) -- (44.3324,277.0718);

  \path[fill=black] (285.86694,291.06494) node[above right] (text4413)
    {$\Z^d$};

  \path[fill=black] (287.08374,450.46558) node[above right] (text4417)
    {$\Z^d$};

  \path[fill=black] (12.695672,454.11597) node[above right] (text4421) {$0$};

  \path[fill=black] (15.737669,292.28177) node[above right] (text4425) {$T$};

  \path[fill=black] (218,285) node[above] {\emph{ego}{\small, sampled at $x=0$}};

  \path[shift={(13.6356,275.93147)},draw=black,line join=miter,line cap=butt,miter
    limit=4.00,line width=0.800pt] (128.5970,42.4884) .. controls
    (159.9910,38.6496) and (168.7103,34.0680) .. (168.7103,34.0680);

  \path[draw=black,miter limit=4.00,line width=0.352pt,rounded corners=0.0000cm]
    (57.8045,294.4127) rectangle (142.1082,336.2943);

  \path[fill=black] (62.287006,332) node[above right] (text4671) 
  { \parbox[b]{8em}{Ancestral path\\ of the selected\\ particle} }; 

  \path[draw=cff0000,line join=miter,line cap=butt,miter limit=4.00,line
    width=0.541pt] (172.7946,396.2723) -- (162.9099,396.2723) --
    (162.9099,403.9045) -- (155.4658,403.7853) -- (155.4658,413.4448) --
    (139.7234,413.4448) -- (139.7234,419.6460) -- (147.4115,419.6460) --
    (147.4115,428.5901) -- (155.2217,428.5901) -- (155.2217,434.7912) --
    (168.8895,434.7912) -- (168.8895,440.9924) -- (161.2014,440.9924) --
    (161.2014,449.6980)(184.3657,288.7980) -- (184.3657,294.8557) --
    (194.0515,294.8557) -- (194.0515,303.9423) -- (184.3657,303.9423) --
    (184.3657,310.0000) -- (201.8002,310.0000) -- (201.8002,320.6010) --
    (211.4861,320.6010) -- (211.4861,329.6876) -- (201.8002,329.6876) --
    (201.8002,335.7453) -- (192.1144,335.7453) -- (192.1144,350.8896) --
    (182.4285,350.8896) -- (182.4285,361.4906) -- (192.1144,361.4906) --
    (192.1144,372.0916) -- (172.7426,372.0916) -- (172.7426,381.1782) --
    (163.0568,381.1782) -- (163.0568,390.2647) -- (172.7426,390.2647) --
    (172.7426,396.3225);

  \begin{scope}[cm={{0.80473,0.0,0.0,0.77114,(19.15523,102.86709)}}]
    \path[shift={(13.6356,275.93147)},draw=c0000ff,line join=miter,line
      cap=butt,line width=0.800pt] (99.1071,174.1400) -- (99.1071,158.0686) --
      (91.0714,158.0686) -- (91.0714,151.4614) -- (99.2857,151.4614) --
      (99.2857,142.1757);

    \path[shift={(13.6356,275.93147)},draw=c0000ff,line join=miter,line
      cap=butt,line width=0.800pt] (100.7143,164.1400) -- (100.7143,148.7829) --
      (107.8571,148.7829) -- (107.8571,134.3186);

    \path[shift={(13.6356,275.93147)},draw=c0000ff,line join=miter,line
      cap=butt,line width=0.800pt] (109.4643,142.1757) -- (109.4643,137.7114) --
      (116.2500,137.7114) -- (116.2500,129.4972);

    \path[shift={(13.6356,275.93147)},draw=c0000ff,dash pattern=on 0.64pt off
      0.32pt,line join=miter,line cap=butt,miter limit=4.00,line width=0.320pt]
      (99.1964,163.8722) -- (100.7143,163.8722);

    \path[shift={(13.6356,275.93147)},draw=c0000ff,dash pattern=on 0.64pt off
      0.32pt,line join=miter,line cap=butt,miter limit=4.00,line width=0.320pt]
      (108.0357,141.9972) -- (109.2857,141.9972);

    \begin{scope}
      \path[draw=c0000ff,dash pattern=on 1.16pt off 1.16pt,line join=miter,line
        cap=butt,miter limit=4.00,line width=0.387pt] (112.1098,444.5165) .. controls
        (112.1098,444.5165) and (101.9875,434.8356) .. (98.8319,428.7557) .. controls
        (95.3992,422.1419) and (92.9632,415.0413) .. (93.1765,407.1755) .. controls
        (93.3898,399.3096) and (94.7489,389.1943) .. (100.7990,383.4130) .. controls
        (105.5693,378.8546) and (113.3528,377.6096) .. (119.9783,378.0786) .. controls
        (125.6144,378.4775) and (131.9597,380.5006) .. (135.4692,384.8678) .. controls
        (139.5625,389.9616) and (139.3398,397.5266) .. (139.1575,404.0233) .. controls
        (138.8842,413.7622) and (137.5618,424.3227) .. (132.0268,432.3928) .. controls
        (128.1780,438.0042) and (115.3064,444.2740) .. (115.3064,444.2740);

    \end{scope}
  \end{scope}
  \path[draw=c0000ff,line join=miter,line cap=butt,line width=0.630pt]
    (230.8230,450.2295) -- (230.8230,437.8362) -- (237.2895,437.8362) --
    (237.2895,432.7412) -- (230.6793,432.7412) -- (230.6793,425.5807);

  \path[draw=c0000ff,line join=miter,line cap=butt,line width=0.630pt]
    (229.5297,442.5181) -- (229.5297,430.6757) -- (223.7816,430.6757) --
    (223.7816,419.5217);

  \path[draw=c0000ff,line join=miter,line cap=butt,line width=0.630pt]
    (222.4883,425.5807) -- (222.4883,422.1381) -- (217.0276,422.1381) --
    (217.0276,415.8038);

  \path[draw=c0000ff,dash pattern=on 0.50pt off 0.25pt,line join=miter,line
    cap=butt,miter limit=4.00,line width=0.252pt] (230.7511,442.3116) --
    (229.5296,442.3116);

  \path[draw=c0000ff,dash pattern=on 0.50pt off 0.25pt,line join=miter,line
    cap=butt,miter limit=4.00,line width=0.252pt] (223.6379,425.4430) --
    (222.6320,425.4430);

  \begin{scope}[cm={{-0.80473,0.0,0.0,0.77114,(321.55014,103.1632)}}]
    \path[draw=c0000ff,dash pattern=on 1.16pt off 1.16pt,line join=miter,line
      cap=butt,miter limit=4.00,line width=0.387pt] (112.1098,444.5165) .. controls
      (112.1098,444.5165) and (101.9875,434.8356) .. (98.8319,428.7557) .. controls
      (95.3992,422.1419) and (92.9632,415.0413) .. (93.1765,407.1755) .. controls
      (93.3898,399.3096) and (97.0634,382.2877) .. (102.8388,376.2082) .. controls
      (107.1927,371.6251) and (109.4988,365.9849) .. (122.0181,370.8738) .. controls
      (126.2889,372.5416) and (130.1103,378.9909) .. (132.3310,384.2129) .. controls
      (135.5310,391.7375) and (136.8872,400.4257) .. (136.0193,408.6081) .. controls
      (135.0275,417.9585) and (131.3365,427.2181) .. (126.0642,434.8489) .. controls
      (123.3259,438.8123) and (115.3064,444.2740) .. (115.3064,444.2740);

  \end{scope}
  \path[draw=c0000ff,line join=miter,line cap=butt,line width=0.630pt]
    (186.9984,450.2295) -- (186.9984,437.8362);

  \path[shift={(13.6356,275.93147)},draw=c0000ff,dash pattern=on 0.91pt off
    0.91pt,line join=miter,line cap=butt,miter limit=4.00,line width=0.305pt]
    (171.7259,170.8496) .. controls (171.7259,170.8496) and (166.0431,164.2590) ..
    (164.6549,160.2430) .. controls (163.4136,156.6520) and (163.3549,152.6554) ..
    (163.7710,148.8788) .. controls (164.2433,144.5910) and (164.2136,139.2277) ..
    (167.5591,136.5044) .. controls (170.3784,134.2094) and (177.9131,135.4942) ..
    (178.4182,135.4942) .. controls (178.9233,135.4942) and (185.7819,139.7701) ..
    (187.2570,143.4492) .. controls (188.9221,147.6020) and (187.7206,152.6092) ..
    (186.2469,156.8337) .. controls (184.3204,162.3560) and (176.3979,171.3546) ..
    (176.3979,171.3546);

  \path[draw=c0000ff,line join=miter,line cap=butt,line width=0.630pt]
    (136.1973,449.9081) -- (136.1973,437.5148);

  \path[draw=c0000ff,dash pattern=on 0.91pt off 0.91pt,line join=miter,line
    cap=butt,miter limit=4.00,line width=0.305pt] (134.5604,446.4597) .. controls
    (134.5604,446.4597) and (128.8775,439.8691) .. (127.4893,435.8530) .. controls
    (126.2480,432.2620) and (126.1894,428.2655) .. (126.6054,424.4888) .. controls
    (127.0778,420.2010) and (127.0481,414.8378) .. (130.3935,412.1145) .. controls
    (133.2128,409.8194) and (140.7476,411.1043) .. (141.2527,411.1043) .. controls
    (141.7577,411.1043) and (148.6164,415.3802) .. (150.0915,419.0593) .. controls
    (151.7566,423.2121) and (150.5551,428.2193) .. (149.0813,432.4438) .. controls
    (147.1548,437.9660) and (139.2324,446.9647) .. (139.2324,446.9647);

  \path[draw=c0000ff,line join=miter,line cap=butt,line width=0.630pt]
    (71.1973,449.9081) -- (71.1973,437.5148);

  \path[draw=c0000ff,dash pattern=on 0.91pt off 0.91pt,line join=miter,line
    cap=butt,miter limit=4.00,line width=0.305pt] (69.5604,446.4596) .. controls
    (69.5604,446.4596) and (65.0775,444.0846) .. (62.4893,435.8530) .. controls
    (59.1610,425.2673) and (51.9679,441.3215) .. (56.2483,411.2745) .. controls
    (58.8286,393.1620) and (53.8653,398.5699) .. (53.9649,394.2573) .. controls
    (55.0305,348.1187) and (83.9663,368.0015) .. (85.5384,394.6757) .. controls
    (85.5681,395.1799) and (83.6164,415.3801) .. (85.0915,419.0592) .. controls
    (86.7566,423.2121) and (85.5551,428.2193) .. (84.0813,432.4438) .. controls
    (82.1548,437.9660) and (74.2324,446.9647) .. (74.2324,446.9647);

  \path[draw=c0000ff,line join=miter,line cap=butt,line width=0.630pt]
    (259.9473,450.9795) -- (259.9473,438.5863);

  \path[shift={(13.6356,275.93147)},draw=c0000ff,dash pattern=on 0.91pt off
    0.91pt,line join=miter,line cap=butt,miter limit=4.00,line width=0.305pt]
    (244.7095,170.3445) -- (227.5369,145.8483) .. controls (227.5369,145.8483) and
    (211.8603,115.8249) .. (206.3819,98.7895) .. controls (202.3083,86.1229) and
    (193.0544,62.7194) .. (196.0959,44.5805) .. controls (197.3377,37.1744) and
    (202.8533,31.0895) .. (207.2855,25.0275) .. controls (209.2969,22.2765) and
    (211.5758,19.6705) .. (214.2010,17.4973) .. controls (216.3783,15.6948) and
    (219.5019,15.2210) .. (221.4078,13.0283)(265.9227,12.7607) .. controls
    (278.7565,17.1269) and (268.8718,13.2810) .. (277.5394,19.5792) .. controls
    (285.4275,25.3110) and (287.7105,54.5896) .. (286.8833,64.7836) .. controls
    (285.8606,77.3870) and (286.4814,91.1036) .. (285.1156,103.6744) .. controls
    (283.5118,118.4349) and (281.0240,133.9587) .. (273.2463,146.6059) .. controls
    (267.3463,156.1997) and (248.2450,169.3344) .. (248.2450,169.3344);

  \path[draw=c0000ff,line join=miter,line cap=butt,line width=0.630pt]
    (161.9626,386.7110) -- (161.9626,374.3177);

  \path[draw=c0000ff,dash pattern=on 0.91pt off 0.91pt,line join=miter,line
    cap=butt,miter limit=4.00,line width=0.305pt] (160.3257,383.2626) .. controls
    (160.3257,383.2626) and (154.6428,376.6720) .. (153.2546,372.6560) .. controls
    (152.0133,369.0649) and (151.9547,365.0684) .. (152.3707,361.2917) .. controls
    (152.8431,357.0040) and (152.8134,351.6407) .. (156.1588,348.9174) .. controls
    (158.9781,346.6223) and (166.5129,347.9072) .. (167.0179,347.9072) .. controls
    (167.5230,347.9072) and (174.3816,352.1831) .. (175.8568,355.8622) .. controls
    (177.5218,360.0150) and (176.3204,365.0222) .. (174.8466,369.2467) .. controls
    (172.9201,374.7690) and (164.9976,383.7676) .. (164.9976,383.7676);

  \path[draw=c0000ff,line join=miter,line cap=butt,line width=0.630pt]
    (202.9830,315.6224) -- (202.9830,303.2291);

  \path[draw=c0000ff,dash pattern=on 0.91pt off 0.91pt,line join=miter,line
    cap=butt,miter limit=4.00,line width=0.305pt] (201.3461,312.1739) .. controls
    (201.3461,312.1739) and (195.6633,305.5833) .. (194.2750,301.5673) .. controls
    (193.0337,297.9763) and (192.0281,292.4645) ..
    (192.4441,288.6879)(218.3293,288.4353) .. controls (218.7317,292.7144) and
    (217.2145,296.8377) .. (215.7408,301.0623) .. controls (213.8143,306.5845) and
    (206.0181,312.6790) .. (206.0181,312.6790);

  \path[draw=black,fill=cffffff,miter limit=4.00,line width=0.320pt,rounded
    corners=0.0000cm] (55.5711,348.7537) rectangle (125.2716,390.6750);

  \path[draw=black,line join=miter,line cap=butt,miter limit=4.00,line
    width=0.800pt] (125.7613,384.7464) .. controls (152.2599,385.7380) and
    (160.1603,385.6117) .. (160.1603,385.6117);

  \path[fill=black] (58.635601,387) node[above right] (text4097)
  { \parbox{8em}{immigration\\along path\\(while alone)} } ;

  \path[fill=black] (138.6356,472.92862) node[above right] (text4109) {initial
    particles};

  \path[shift={(13.6356,275.93147)},draw=black,line join=miter,line cap=butt,line
    width=0.800pt] (122.5000,192.7114) -- (59.2857,176.6400);

  \path[shift={(13.6356,275.93147)},draw=black,line join=miter,line cap=butt,line
    width=0.800pt] (130.0000,188.4257) -- (97.8571,175.5686);

  \path[shift={(13.6356,275.93147)},draw=black,line join=miter,line cap=butt,line
    width=0.800pt] (139.6429,188.0686) -- (123.5714,175.9257);

  \path[shift={(13.6356,275.93147)},draw=black,line join=miter,line cap=butt,line
    width=0.800pt] (161.7857,187.7114) -- (171.7857,175.9257);

  \path[shift={(13.6356,275.93147)},draw=black,line join=miter,line cap=butt,line
    width=0.800pt] (182.1429,187.7114) -- (213.2143,175.9257);

  \path[shift={(13.6356,275.93147)},draw=black,line join=miter,line cap=butt,line
    width=0.800pt] (199.2857,189.8543) -- (244.6429,175.5686);

  \path[fill=black] (223,375) node[above right] 
  { \parbox{10em}{Once arrived,\\ particles perform\\lonely branching\\
  walk \\ (but no branching\\ on red path)} } ; 
\end{scope}
\end{tikzpicture}
\end{center}
\caption{Representation of the locally size-biased system $\widehat \eta^{(0,T)}$.} \label{figure}
\end{figure}

Given the locally size-biased process $\widehat{\eta}^{(x, T)}$, we
can select uniformly at random one of the particles at $x$ at time $T$
-- note that $\widehat{\eta}^{(x, T)}_x(T) \ge 1$ a.s.\ -- and denote
its ancestral path by $X:=(X_t)_{0\leq t\leq T}$. The pair
$(\widehat{\eta}^{(x, T)}, X)$ admits the following alternative
representation (see Figure~\ref{figure}), which will be the starting
point of our analysis.
\medskip

Pick $X(0)$ with distribution 
\begin{equation}\label{X}
\P(X(0)=y) = \frac{\E[\eta_y(0)] p_{yx}(T)}{\E[\eta_x(T)]}, \qquad y\in \Z^d. 
\end{equation}
Given $X(0)=y$, let $(X(t))_{0 \le t \le T}$ be a random walk (with kernel $p$) conditioned
to be at $x$ at time $T$, and let $\widetilde{\xi}^{(x,T)}(0)$ have the law of 
$\widehat{\eta}^{(y,0)}(0)$. Given the path $(X(t))_{0\leq t\leq T}$, the system 
$(\widetilde{\xi}^{(x,T)}(t))_{0 \le t \le T}$ evolves according to 
the dynamics of the lonely branching random walks, except that one of the 
particles at $X(0)$ at time $0$ becomes the ``selected particle'' and follows the path $X$. 
Whenever a branching event occurs for the selected particle, which happens with rate $\gamma$ 
while the selected particle is alone, it produces an offspring (i.e., it never dies). 
\begin{prop}
  \label{prop:etahatrepr}
  The pair $(\widetilde{\xi}^{(x,T)}, X)$ has the same distribution as
  $(\widehat{\eta}^{(x, T)}, X)$. In particular,
  \begin{align} 
    \label{eq:tildexi=hateta}
    \E[f(\widetilde{\xi}^{(x,T)})] = \frac{\E[\eta_x(T) f(\eta)]}{\E[\eta_x(T)]}
  \end{align}
holds for any non-negative measurable test function $f : \N_0^{\Z^d} \to [0,\infty)$.
\end{prop}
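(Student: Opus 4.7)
\emph{Plan.} I would prove the identity via a spine / ``many-to-one'' decomposition of the lonely branching random walks. The process $\widetilde\xi^{(x,T)}$ defined in the proposition is precisely the output of such a decomposition, and the content is to verify that it has the claimed distribution. Realise $\eta$ using independent Poisson clocks --- one for each particle's random-walk jumps (rate $1$, kernel $p$) and one for its branching attempts (rate $\gamma$, effective only when the particle is alone) --- so that every particle alive at time $T$ has a well-defined ancestral trajectory $X^{(i)}:[0,T]\to\Z^d$ that is marginally a $p$-random walk, together with a unique initial ancestor. Then
\begin{align*}
\E[\eta_x(T)f(\eta)] \,=\, \E\Big[\sum\nolimits_i 1_{\{X^{(i)}(T)=x\}}f(\eta)\Big],
\end{align*}
and picking one summand $i$ uniformly realises the size-biasing~\eqref{lsbias}; call the chosen particle the \emph{spine} and its trajectory $X$.

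\emph{Marginal law of the spine.} Because the jump clocks are independent of the branching clocks, conditional on the spine's ancestor being an initial particle at site $y$, the spine trajectory is a $p$-random walk. Conditioning further on $\{X(T)=x\}$ therefore yields a $p$-random-walk bridge from $y$ to $x$ on $[0,T]$. Summing the weights $\E[\eta_y(0)]p_{yx}(T)$ over $y$ gives the distribution~\eqref{X} for $X(0)$, while the size-biasing of $\eta(0)$ induced by selecting the spine's ancestor at $y$ produces precisely the law $\widehat\eta^{(y,0)}(0)$ for the remaining initial configuration.

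\emph{Offspring size-biasing along the spine.} A critical binary offspring variable $N$ with $\P(N{=}0)=\P(N{=}2)=\tfrac12$ has size-bias $\widehat N\equiv 2$. Hence each branching event met by the spine --- which in the original dynamics happens at rate $\gamma$ while the spine is alone --- is forced in the size-biased world to produce two particles, the spine continuing as a uniformly chosen one of them and the other becoming a fresh particle that evolves under the original lonely branching dynamics. Size-biasing preserves the rates and modifies only the offspring law (the standard spine / size-biased-tree computation), so the spine's instantaneous branching rate remains $\gamma\cdot 1_{\{\widehat\eta^{(x,T)}_{X(t)}=1\}}$. Conditional on $X$ and on the size-biased initial configuration, all non-spine particles --- the remaining initial particles, the particles immigrated along the spine, and their descendants --- evolve under the original lonely branching dynamics, interacting with the spine only through the occupancy indicator at $X(t)$. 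This is precisely the definition of $\widetilde\xi^{(x,T)}$, hence $(\widetilde\xi^{(x,T)},X)\stackrel{d}{=}(\widehat\eta^{(x,T)},X)$ and \eqref{eq:tildexi=hateta} follows.

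\emph{Main obstacle and rigour.} The delicate point is the coupling between the spine and the rest of the system created by the ``lonely'' clause: unlike for IBRW, the decomposition cannot be written as a product over independent family trees, since the spine contributes to the occupancy indicator that controls everyone's branching rate. I would justify the continuous-time decomposition by discretising time in the Poissonian construction of Remark~\ref{rmk:rigorousconstruction}, carrying out the then finite-dimensional size-biasing step by step, and passing to the limit using the generator bound~\eqref{eq:Lnormbound}. An equivalent route is to compute the formal generators of $(\widetilde\xi^{(x,T)},X)$ and of the size-biased $(\widehat\eta^{(x,T)},X)$ on the core $\mathrm{Lip}(\mathbb{S}_w)$, check they agree, and invoke uniqueness of the corresponding martingale problem.
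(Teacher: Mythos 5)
Your proposal matches the paper's own approach: the heuristic via a spine/many-to-one decomposition with size-biased offspring along the spine corresponds exactly to the paper's density comparison for marked family trees (showing $f(S,Y)=g(S,Y)$, where the ``lonely length'' factors $l_{\mathrm{lon}}$ account for the inter-family coupling you flag as the obstacle), and the two rigor routes you sketch --- discretise time per Remark~\ref{rmk:rigorousconstruction} (the route in \cite[Prop.~5]{B03}), or work with generators on the core $\mathrm{Lip}(\mathbb{S}_w)$ (the paper's Doob $h$-transform argument in Section~\ref{subsect:DoobTransform}) --- are precisely the paper's two options. One caveat on the second route: the pair $(\widehat\eta^{(x,T)},X)$ with the retrospectively selected spine is not \emph{a priori} a Markov process whose generator you can simply write down and compare, which is why the paper instead phrases it as a filtering/Markov-mapping problem (verifying \eqref{eq:intLtilde=Lhat} and invoking \cite[Thm.~A.15]{KR11}) for the explicitly Markov pair $(\widetilde\xi,X)$ and its projection, rather than directly matching generators of the two pairs.
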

Proposition~\ref{prop:etahatrepr} is \cite[Prop.~5]{B03}, a proof via
a time-discretisation approximation was sketched there (the analogous
result in the discrete-time case can be achieved by a straightforward
calculation, see \cite[Lemma~8]{B03}).

Let us explain heuristically why such a representation holds. The discussion in \cite{B03} is more detailed;
we also present in Section~\ref{subsect:DoobTransform} below an alternative proof of Proposition~\ref{prop:etahatrepr} by interpreting the local size-biasing of $\eta(T)$ as a Doob transformation. 

For simplicity, assume that $\sum_y \eta_y(0)<\infty$ (the general case requires an additional approximation argument). Note that the particle configurations $(\eta(t))_{0\leq t\leq T}$ can be obtained from the family trees of all the ancestral particles at time 0, where the family tree ${\cal T}$ of an ancestral particle records the times of branching/death and the jumps of all its descendants. Let $\widehat {\cal T}:=\{\widehat{\cal T}_{y,i}\}_{y\in \Z^d, 1\leq i\leq \widehat\eta^{(x,T)}_y(0)}$ be the set of family trees generated by the size-biased lonely branching random walks $\widehat \eta^{(x,T)}$, and let $\widetilde {\cal T}:= \{\widetilde{\cal T}_{y,i}\}_{y\in \Z^d, 1\leq i\leq \widetilde\xi^{(x,T)}_y(0)}$ be the set of family trees generated by the
$\widetilde \xi^{(x,T)}$ process. To show that $(\widetilde{\xi}^{(x,T)}, X)$ has the same distribution as $(\widehat{\eta}^{(x, T)}, X)$, it suffices to show that $(\widetilde{\cal T}, X)$ and $(\widehat{\cal T}, X)$ have the same distribution. We refrain from formally defining the family trees. For a formalisation of a space of marked trees that could be used here see e.g.\ \cite{HR17} and the references there.

Given the family tree $S$ of an ancestral particle at time 0,  let $b(S)$, $d(S)$ and $j(S)$ denote respectively the set of times in $(0,T)$ when the ancestral particle or any of its descendants undergoes a branching, death, or a jump. For each $t\in j(S)$,
let $\Delta(t)\in \Z^d$ denote the associated jump increment. Let $l(S)$ denote the total time length of the family tree $S$ up to time $T$. For a selected path $X$ in the family tree $S$, let $b(X)$ and $j(X)$ denote the set of times in $(0,T)$ when $X$ undergoes a branching or a jump. 

Note that the probability density (w.r.t.\ product Lebesgue measure for the times of branching, death, and jumps) of $\widehat {\cal T}$ being equal to a given set of family trees $S=\{S_{z,i}\}_{z\in \Z^d, 1\leq i\leq \eta_z(0)}$, and $X$ following a given path $Y$ in $S_{y,1}$ with $Y(T)=x$, is equal to 
\begin{align*}
& f(S, Y) = 1_{\rm Adm}(S,Y) \frac{1 }{\eta_x(T)}
\frac{\P(\eta(0))\, \eta_x(T)}{\E[\eta_x(T)]} \prod_{z\in\Z^d} \eta_z(0)! \prod_{i=1}^{\eta_z(0)} \rho(S_{z,i}) \\
\mbox{with} \quad & \rho(S_{z,i}) = e^{- l(S_{z,i}) - \gamma l_{\mathrm{lon}}(S_{z,i},S)} 
\Big(\frac{\gamma}{2}\Big)^{b(S_{z,i})+d(S_{z,i})} 2^{b(S_{z,i})} \!\!\! \prod_{t\in j(S_{z,i})} \!\!\! p(\Delta(t)),
\end{align*}
where $(\eta(t))_{0\leq t\leq T}$ is the particle configuration generated by the family trees $S$, 
$l_{\mathrm{lon}}(S_{z,i},S)$ is the total ``lonely length'' of the family tree $S_{z,i}$ w.r.t.\ the 
whole set $S=\{S_{z',i'}\}_{z'\in \Z^d, 1\leq i'\leq \eta_z(0)}$ (i.e.\ the length of all those parts of the branches 
of the tree $S_{z,i}$ which correspond to a particle which is currently alone at its site), 
$1_{\rm Adm}$ ensures that $(S,Y)$ is an admissible configuration for the lonely branching random walks, the factor $1/\eta_x(T)$ accounts for the probability of selecting $Y$ among all $\eta_x(T)$ paths ending at $x$ at time $T$, the factor $\eta_z(0)!$ accounts for the symmetry in assigning the family trees $(S_{z,i})_{1\leq i\leq \eta_z(0)}$ to the $\eta_z(0)$ individuals at $z$ at time $0$, the exponential factor accounts for the absence of branching, death and jumps in $S_{z,i}$ except at the specified times,  the factor $\gamma/2$ is the probability density of a branching or death occurring at a specified time, and a factor $2$ is assigned to each branching to account for the symmetry in assigning sub-family trees to the two descendants. 

Similarly, we find that the probability density of $(\widetilde {\cal T}, X)$ being equal to $(S, Y)$ is given by 
\begin{align*}
g(S, Y) & = 1_{\rm Adm}(S,Y) \frac{ \E[\eta_y(0)] p_{yx}(T)}{\E[\eta_x(T)]}\cdot \frac{e^{-T -\gamma l_\mathrm{lon}(Y,S)}\gamma^{b(Y)}\prod_{t\in j(Y)} p(\Delta(t))}{p_{yx}(T)} \cdot \frac{\eta_y(0)\P(\eta(0))}{\E[\eta_y(0)]}  \\
& \quad \times  e^{-(l(S_{y,1})-T) - \gamma (l_\mathrm{lon}(S_{y,1},S)-l_\mathrm{lon}(Y,S))}
\Big(\frac{\gamma}{2}\Big)^{b(S_{y,1})-b(Y)+d(S_{y,1})} 2^{b(S_{y,1})-b(Y)} \!\!\! \prod_{t\in j(S_{y,1})\backslash j(Y)} \!\!\! p(\Delta(t)) \\
& \quad \times (\eta_y(0)-1)! \prod_{i=2}^{\eta_y(0)} \rho(S_{y,i}) \times \prod_{z\in\Z^d, z\neq y} \eta_z(0)! \prod_{i=1}^{\eta_z(0)} \rho(S_{z,i}).
\end{align*}
Observe that $f(S, Y)=g(S, Y)$, and hence $(\widetilde{\xi}^{(x,T)}, X)$ has the same distribution as $(\widehat{\eta}^{(x, T)}, X)$.

\subsubsection{Local size-biasing as a Doob-transform: Another proof of Proposition~\ref{prop:etahatrepr}}
\label{subsect:DoobTransform}

Proposition~\ref{prop:etahatrepr}
can be proved ``directly'' (and in a sense, ``purely algebraically''
using computations with the generator) without approximation
arguments, i.e., not using time-discretisation nor approximation by
finite systems.  This can be formulated in terms of a ``filtering
problem'' for an enriched Markov process that we briefly sketch here,
with more detailed computations relegated to
Appendix~\ref{sect:DoobTransform-aux}. 
\medskip

Fix $x_0 \in \Z^d$, $T>0$.
The function
\begin{equation}
  \label{eq:h.eta.t}
  h(\eta,t) := \sum_{z \in \Z^d} \eta_z p_{z,x_0}(T-t), \quad \eta \in \mathbb{S}_w, 0 \le t \le T
\end{equation}
solves $\big(L + (\partial/\partial t) \big) h (\eta,t) \equiv 0$ with
$L$ from \eqref{generatorcatalyticbranchers}, i.e.\ $h$ is space-time
harmonic for $(\eta_t)_{0 \le t \le T}$, see
\eqref{eq:hspacetimeharmonic} in
Appendix~\ref{sect:DoobTransform-aux}.  Thus, we can define the
$h$-transformed process $\big(\widehat{\eta}(t)\big)_{0 \le t \le T}$
with (time-inhomogeneous) generator
\begin{equation} 
  \label{eq:hatLt}
  \widehat{L}_t f (\eta, t) = \frac1{h(\eta,t)} \Big( \big(L + \tfrac{\partial}{\partial t} \big) h f \Big) (\eta,t)
\end{equation}
for $0 \le t < T$, $\eta \in \mathbb{S}_w$. With reference to Remark~\ref{rmk:rigorousconstruction}, 
we can use for example test functions 
$f : \mathbb{S}_w \times [0,T] \to \R$ such that $f(\cdot,t)$ and 
$\frac{\partial}{\partial t} f(\cdot,t)$ are both Lipschitz continuous uniformly in $t \in [0,T]$.
Note that by definition, for any (say, non-negative or bounded) test function $f$ 
\begin{align} 
  \E\big[ f\big((\widehat{\eta}(t))_{0 \leq t \leq T} \big)\big]
  & = \frac1{\E[h(\eta(0),0)]} \E\big[ h(\eta(T),T) f\big((\eta(t))_{0 \leq t \leq T}\big)\big]
    \notag \\
  & = \frac1{\E[\eta_{x_0}(T)]} \E\big[ \eta_{x_0}(T) f\big((\eta(t))_{0 \leq t \leq T}\big) \big], 
\end{align}
i.e., we have $\widehat{\eta} \mathop{=}^d \widehat\eta^{(x,T)}$ from \eqref{lsbias}.

Straightforward computation (see Appendix~\ref{sect:DoobTransform-aux}) yields 
\begin{align} 
\widehat{L}_t f (\eta, t) = &
\sum_{x, y\in\Z^d} \eta_x p_{xy}  \Big(1 - s_x(\eta,t) + s_x(\eta,t) \frac{p_{y,x_0}(T-t)}{p_{x,x_0}(T-t)} \Big)  
\big( f(\eta^{x \to y},t) - f(\eta,t) \big) \notag \\ 
& + \frac{\gamma}{2} \sum_{x} 1_{\{\eta_x=1\}} \Big( \big(1+s_x(\eta,t)\big) \big( f(\eta^{+x},t) - f(\eta,t) \big)
\notag \\[-2ex] 
& \hspace{8em} + \big(1-s_x(\eta,t)\big) \big( f(\eta^{-x},t) - f(\eta,t) \big) \Big) 
+ \frac{\partial}{\partial t} f(\eta,t)
\label{eq:Lhat-formula}
\end{align}
where 
\begin{align}
\label{eq:Lhat-formula.sx}
s_x(\eta,t) = \frac{p_{x,x_0}(T-t)}{h(\eta,t)} = 
\frac{p_{x,x_0}(T-t)}{\sum_z \eta_z p_{z,x_0}(T-t)} 
= \frac1{\eta_x} \frac{\eta_x p_{x,x_0}(T-t)}{\sum_z \eta_z p_{z,x_0}(T-t)} 
\end{align}
can be interpreted as 
the probability that, given $\widehat\eta(t)=\eta$, the selected particle is a particular 
particle at site $x$ at time $t$.

\paragraph{Enriched process including a selected path}
Note that the formulation of $\widehat{\eta}$ as a time-inhomogeneous Markov process with generator 
\eqref{eq:hatLt} does not literally contain a particle with a ``privileged status'', 
in contrast to our formulation at the beginning of Section~\ref{sect:stochrephateta}.

The statement in Proposition~\ref{prop:etahatrepr} includes the path
$X$ of the selected particle, and we can keep track of the ``tagged
position'' $X(t)$ where the selected particle currently sits in a
Markovian way. Indeed, the process
$(\widetilde{\xi}, X) = \big(\widetilde{\xi}(t), X(t)\big)_{0\le t \le T}$
from Proposition~\ref{prop:etahatrepr} is a time-inhomogeneous
Markov process with values in $\mathbb{S}_w \times \Z^d$ (more
precisely, only pairs $(\xi,z)$ with $\xi_z \ge 1$ are possible) and
generator
\begin{align}
  \widetilde{L}_t f(\xi,z,t) & = \sum_{x, y\in\Z^d} (\xi_x -\delta_{xz}) p_{xy} \big(f(\xi^{x\to y},z,t) - f(\xi,z,t)\big) 
  \notag \\
  & \hspace{1.5 em} 
  + \sum_{y\in\Z^d} p_{zy} \frac{p_{y,x_0}(T-t)}{p_{z,x_0}(T-t)} 
  \big(f(\xi^{z\to y},y,t) - f(\xi,z,t)\big) \notag \\
  & \hspace{1.5 em} + \frac{\gamma}{2} \sum_{x \neq z} 1_{\xi_x=1} \big( f(\xi^{+x},z,t) + f(\xi^{-x},z,t) - 2 f(\xi,z,t) \big) 
  \notag \\
  & \hspace{1.5 em}+ \gamma 1_{\xi_z=1} \big( f(\xi^{+z},z,t)  - f(\xi,z,t) \big) 
  + \frac{\partial}{\partial t} f(\xi,z,t)
\end{align}
Here, we can use test functions $f : \mathbb{S}_w \times \Z^d \times [0,T] \to \R$ such that $f(\cdot,z,t)$ and 
$\frac{\partial}{\partial t} f(\cdot,z,t)$ are both Lipschitz uniformly in $z \in \Z^d$ and 
$t \in [0,T]$. 
Strictly speaking, since some jump rates can become $\infty$ at $t=T-$ (namely, for $z\neq x_0$), 
we should restrict to subintervals $[0,T']$ with $T'<T$ first and then let finally $T' \nearrow T$; we will 
skip these details in the presentation.
%

\paragraph{Markov mapping}
Define the projection $\pi_{\mathbb{S}_w} : \mathbb{S}_w \times \Z^d \to \mathbb{S}_w$ 
with $\pi_{\mathbb{S}_w}\big( (\xi,z) \big) = \xi$.
Proposition~\ref{prop:etahatrepr} follows from the distributional identity 
\begin{align} 
  \label{eq:projtildeexi=hateta}
  \big( \pi_{\mathbb{S}_w}(\widetilde\xi(t), X(t)) \big)_{0 \le t \le T} 
  \mathop{=}^d \big(\widehat{\eta}(t)\big)_{0 \le t \le T} \, .
\end{align}

In fact, we have
\[
  \P\big( X(t) \in \cdot \,\big|\, \sigma(\widetilde\xi(s) : s \le t)\big)
  = \alpha_t(\widetilde\xi(t), \cdot),
\]
where for $0 \le t \le T$, the probability kernels $\alpha_t$ from $\mathbb{S}_w$ to 
$\mathcal{M}_1(\mathbb{S}_w \times \Z^d)$ are defined via 
\begin{align} 
  \alpha_t\big(\xi, \{ (\xi,z) \} \big) = \frac{\xi_z p_{z,x_0}(T-t)}{h(\xi,t)} 
  = \xi_z s_z(\xi, t), \quad \xi \in \mathbb{S}_w, \: z \in \Z^d
\end{align}
with $h(\xi,t) = \sum_x \xi_x p_{x,x_0}(T-t)$ from \eqref{eq:h.eta.t}. Obviously 
$\alpha_t\big(\xi, \pi_{\mathbb{S}_w}^{-1}(\{\xi\})\big) = 1$ for each $\xi$. 
\smallskip

We can view this as a
``filtering problem'' for the process with a tagged site and
\eqref{eq:projtildeexi=hateta} is a consequence of (a
time-inhomogeneous version of) a Markov mapping theorem, see e.g.\
\cite[Thm.~A.15]{KR11} or \cite[Cor.~3.3]{KN11}.  Note that these
results are literally formulated for time-homogeneous Markov processes,
but the time-inhomogeneous case can be easily included by considering
time as an additional coordinate of the process.  (For the function
$\psi$ in \cite[Thm.~A.15]{KR11}/\cite[Cor.~3.3]{KN11} we can use $\psi_w(\eta,z) := 1 +
||\eta||_w$ with $||\eta||_w$ from \eqref{def:wnorm}). 
\smallskip

Consider suitable test functions $f : \mathbb{S}_w \times \Z^d \times [0,T] \to \R$, 
define a function $g$ ($=g(f)$) on $\mathbb{S}_w$ via 
\begin{align} 
g(\eta, t) := \int_{\mathbb{S} \times \Z^d} f(\xi,z,t) \, \alpha_t\big(\eta, d(\xi,z)\big). 
\end{align}
Note that $\pi_{\mathbb{S}_w}(\widetilde\xi(0), \widetilde{X}(0)) = \widetilde\xi(0) \mathop{=}^d 
\widehat{\eta}(0)$ by construction. 
To conclude \eqref{eq:projtildeexi=hateta} for $T>0$ we need to verify that 
\begin{align} 
  \label{eq:intLtilde=Lhat}
  \int_{\mathbb{S}_w \times \Z^d} \widetilde{L}_t f(\xi,z,t) \, \alpha_t\big(\eta, d(\xi,z)\big) 
   = \widehat{L}_t g(\eta,t ).
\end{align}
It suffices to consider functions $f$ of the form 
\begin{align} 
  \label{eq:projtildeexi=hateta.f-form}
  f(\xi, z, t) = f_1(\xi,t) 1_{z=z_0}
\end{align}
for some suitable $f_1 : \mathbb{S}_w \times [0,T] \to \R$ and $z_0 \in \Z^d$. 
The proof that \eqref{eq:intLtilde=Lhat} holds for such functions is a lengthy but straightforward 
computation with the generators and is delegated to Appendix~\ref{sect:DoobTransform-aux}.

\subsection{The size-biased process viewed from the immigration source} \label{subsect:relviewpoint}
We have just shown that the locally size-biased process $(\widehat \eta^{(x,T)})_{0\leq t\leq T}$, together with the randomly chosen path $X$, has the same distribution as $(\widetilde \xi^{(x,T)}, X)$, where $X$ can be interpreted as the immigration source. When $\eta(0)$
is translation invariant, it is easily seen that the process $\xi=(\xi_z(t))_{z\in\Z^d, 0 \le t \le T}$ with $\xi_z(t):= \widetilde \xi^{(x,T)}_{X(t)+z}(t)-\delta_{0}$, where the immigration source is shifted to the origin and removed from the particle configuration, is a time-homogeneous Markov process with (formal) generator 
\[
L = L_\mathrm{rw} + L_\mathrm{br} + L_\mathrm{im} + L_\mathrm{mf}
\]
with 
\begin{equation}\label{xigen}
\begin{aligned}
L_\mathrm{rw}f(\xi) &= \sum_{x, y} \xi_x p_{x y} 
        \big( f(\xi^{x \to y}) - f(\xi) \big), \\
L_\mathrm{br} f(\xi) & = \gamma \sum_{x \neq 0} 1_{\{\eta_x=1\}} \frac12 
                        \big( f(\xi^{+x}) +  f(\xi^{-x})- 2f(\xi) \big), \\
L_\mathrm{im} f(\xi) & = \gamma 1_{\{\xi_0=0\}} \big( f(\xi + \delta_0) - f(\xi) \big), \\ 
L_\mathrm{mf} f(\xi) & =  \sum_x p_x \big( f(\theta_x \xi) - f(\xi) \big),
\end{aligned}
\end{equation}
which encode respectively the random walk motions of the particles, the lonely critical binary branching of the particles, the immigration of particles at the origin, and the spatial shift $(\theta_x \xi)_y = \xi_{x+y}$ to compensate the jumps of the immigration source. 

The process $\xi$ is clearly a well-defined Markov process on the space of finite configurations
\begin{align} \label{Sfin}
  \mathbb{S}_{\rm fin} := \{ \xi \in {\mathbb N}_0^{\Z^d} \, : \, 
  {\textstyle\sum_{x\in\Z^d}} \xi_x< \infty \}.
\end{align}
Let us equip $\mathbb{S}_{\rm fin}$ with the partial order $\preceq$ such that $\xi\preceq \xi'$ if and only if $\xi_x\leq \xi'_x$ for all $x\in\Z^d$. It is then easily seen that $\xi$ is monotone in the sense that: given two initial configurations $\xi(0)\preceq \xi'(0)$, there is a coupling such that almost surely, $\xi(t)\preceq \xi'(t)$ for all $t\geq 0$. For this, one can use, for example, a small adaptation of the construction in \cite[Section~2.2]{B03}.

Using this monotonicity, we can further extend the state space of $\xi$ to 
\begin{equation}
\mathbb{S}:=(\N_0\cup\{\infty\})^{\Z^d}, 
\end{equation}
equipped with the same partial order $\preceq$. More precisely, for any $\xi(0)\in \mathbb{S}$, let $\xi^{(n)}(0)\in \mathbb{S}_{\rm fin}$ be any sequence which increases monotonically to $\xi(0)$. We then define $(\xi(t))_{t\geq 0}$ to be the monotone limit of $(\xi^{(n)}_t)_{t\geq 0}$ under the afore-mentioned coupling of $(\xi^{(n)})_{n\in\N}$. Note that the law of $(\xi_t)_{t\geq 0}$ does not depend on the choice of $\xi^{(n)}(0)\uparrow \xi(0)$. 
It is in principle possible that $\xi_x(0)$ grows so quickly as $|x|\to\infty$ that 
$\xi_{x'}(t') = \infty$ occurs at some point $t' \ge 0$ for some $x'$ and then $\xi_\cdot(t'') \equiv +\infty$ 
for all $t''>t'$; however, this will not be the case for the initial conditions we consider below.
\smallskip

Inspection of the construction of $\widetilde \xi^{(x,T)}$ and its relation with $\xi$ shows that: if $\E[\eta_y(0)]$ is constant in $y\in\Z^d$ 
(which we can assume by the remark after Theorem~\ref{thm1}),  then the shifted path $(X(t)-X(0))_{0\leq t\leq T}$ from \eqref{X} is a random walk with transition kernel $p$, and for any $T>0$, we have the stochastic domination relation
\begin{equation}\label{domination}
\mathscr{L}(\xi(T) \, | \xi(0) \equiv 0) \preceq 
\mathscr{L}(\widetilde{\xi}^{(0,T)}(T)) = \mathscr{L}(\widehat{\eta}^{(0,T)}(T))
\end{equation}
(we can think of $\xi$ as describing 
a subset of the particles in $\widetilde{\xi}^{(0,T)}$, namely only 
the relatives of the selected particle). To prove \eqref{eq:sbdiv} and conclude the proof of Theorem~\ref{thm1}, it then suffices to show that given $\xi(0)\equiv 0$, $\xi_x(t)\to\infty$ in probability for all $x\in\Z^d$.

\section{Proof of Theorem~\ref{thm1}}
\label{sect:proofthm1}

As noted after \eqref{domination}, to prove Theorem \ref{thm1}, it suffices to show that $\xi$, the locally size-biased process viewed from the immigration source introduced in Section \ref{subsect:relviewpoint} above, diverges locally with probability 1. We will accomplish this by first establishing a dichotomy between $(\xi_x(t))_{t\geq 0}$ being tight and $\xi_x(t)\to\infty$ in probability for every $x\in\Z^d$, formulated in Lemma \ref{lem:xidichot} below. We will then rule out tightness by contradiction, using first and second moment bounds for $\xi$ and the Paley-Zygmund inequality. 

%

\subsection{Dichotomy between tightness and unbounded growth}
\label{subsect:dichotomy}


\begin{lem} 
  \label{lem:xidichot}
  The process $\xi:=(\xi_t)_{t\geq 0}$ is monotone on the state space $\mathbb{S}$. Furthermore, starting from
  $\xi(0) \equiv 0$, the law $\mathcal{L}(\xi(t))$ is stochastically
  non-decreasing in $t$, and the following dichotomy holds: 
 \begin{itemize}
 \item[\rm i)] either $\big\{{\cal L}(\xi_x(t)) :{t \ge 0} \big\}$ is tight for 
  every $x\in\Z^d$;  
  \item[\rm ii)] or $\xi_x(t) \to \infty$ in probability as $t \to \infty$ for every $x\in\Z^d$. 
  \end{itemize}
  In case {\rm i)}, we have $\xi(t) \Rightarrow \xi^{(\infty)}\in \N_0^{\Z^d}$ in the sense of finite-dimensional 
  distributions, 
  where ${\cal L}(\xi^{(\infty)})$ is a stationary law for the process, with $\P(\xi^{(\infty)}_0=0)>0$.
\end{lem}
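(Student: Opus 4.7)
The plan is to establish the assertions in order, with the dichotomy as the main step.

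\emph{Monotonicity and stationary limit.} Monotonicity of $\xi$ on $\mathbb{S}$ was already explained preceding the lemma via a graphical coupling. Since $\xi(0)\equiv 0$ is the minimum of $\mathbb{S}$, the Markov property together with this monotonicity yields, for $0\le s<t$, a coupling under which $\xi(t)\succeq \widetilde{\xi}(t-s)$ a.s., where $\widetilde{\xi}$ is an independent copy of $\xi$ started from $\mathbf{0}$; hence $\mathcal{L}(\xi(t))$ is stochastically non-decreasing in $t$. To realize $\xi^{(\infty)}$ as an almost-sure pointwise limit I use a \emph{shifted-start coupling}: for $n\in\N$ let $\xi^{(n)}$ denote the process started at time $-n$ from $\mathbf{0}$, all driven by a common family of Poisson inputs $(U_t)_{t\in\R}$. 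By graphical monotonicity $(\xi^{(n)}(0))_n$ is $\preceq$-non-decreasing a.s., so $\xi^{(\infty)}(0):=\lim_n\xi^{(n)}(0)\in\mathbb{S}$ exists a.s.\ and its law is the finite-dimensional limit of $\mathcal{L}(\xi(t))$ as $t\to\infty$. Passing $t\to\infty$ in $\mathcal{L}(\xi(t+s))=\mathcal{L}(\xi(t))P_s$ via monotone convergence / Feller continuity of the semigroup on $\preceq$-increasing initial laws then yields stationarity.

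\emph{Dichotomy.} The heart is a \emph{spreading estimate}: for every $x_0,y\in\Z^d$ and $M\ge 1$, if $\xi$ is started from $K\delta_{x_0}$ then $\P(\xi_y(1)\ge M)\to 1$ as $K\to\infty$. I would establish this by retaining only those of the $K$ initial particles that undergo no branching in $[0,1]$ (each with probability at least $e^{-\gamma}$, since the branching rate at any site is at most $\gamma$) and tracking their positions in the frame of the moving immigration source; a short computation with \eqref{xigen} shows that in this frame each such particle performs a random walk with symmetrised kernel $\widehat p$ at jump rate $2$, independently of the others conditionally on the mean-field shift process, so the count at $y$ at time $1$ stochastically dominates a $\mathrm{Bin}(K',c_{x_0,y})$ with $K'$ the number of never-branched particles and $c_{x_0,y}>0$, both diverging as $K\to\infty$. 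Applying this with the strong Markov property at $\tau_K:=\inf\{t\colon \xi_{x_0}(t)\ge K\}$ and the shifted-start coupling, one deduces that on $\{\xi^{(\infty)}_{x_0}=\infty\}$ we have $\xi^{(\infty)}_y=\infty$ a.s.\ for every $y$, so by symmetry the events $A_x:=\{\xi^{(\infty)}_x=\infty\}$ coincide with a common event $A$. The $0$-$1$ conclusion $\P(A)\in\{0,1\}$ then follows from the fact that $\mathcal{L}(\xi^{(\infty)})$ is the unique limit from the minimal initial condition and hence extremal: a nontrivial convex split $q\mu_\infty+(1-q)\mu_\mathrm{fin}$, $q\in(0,1)$, across the two dynamics-invariant subsets $\{\xi\equiv\infty\}$ and $\N_0^{\Z^d}$ is incompatible with this extremality (and with the locally finite nature of the approximants $\xi^{(n)}(0)$). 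Case $\P(A)=0$ is~(i), while $\P(A)=1$ upgrades by monotonicity of $\mathcal{L}(\xi(t))$ to $\P(\xi_x(t)\ge M)\uparrow 1$ for every $x,M$, giving~(ii).

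\emph{Non-empty origin in case (i).} Assume for contradiction $\pi(\xi_0=0)=0$ with $\pi:=\mathcal{L}(\xi^{(\infty)})$. Apply the stationarity identity $\int Lf_k\,d\pi=0$ to the bounded test functions $f_k:=\mathbf{1}_{\{\xi_0\le k\}}$ with $L$ from \eqref{xigen}: the integrated ``out-rate'' from $\{\xi_0\le k\}$ vanishes once $\pi(\xi_0\le k)=0$, while the ``in-rate'' at any configuration with $\xi_0=k+1$ is at least $k+1$ (any of the $k+1$ particles at $0$ can jump out at rate $1$). Hence $\pi(\xi_0=k+1)=0$, and by induction $\pi(\xi_0\in\N_0)=0$, so $\xi^{(\infty)}_0=\infty$ a.s., contradicting case~(i). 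The main obstacle I anticipate is the $0$-$1$/extremality step in the dichotomy: the non-translation-invariance of $\xi$ (pinned immigration source at the origin) precludes standard Hewitt--Savage or translation-ergodicity arguments, so one must instead carefully exploit the monotone structure of the shifted-start coupling.
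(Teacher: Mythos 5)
Your proposal takes a genuinely different route from the paper in both places where the real work happens, so let me compare.

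\emph{Dichotomy.} You sketch a $0$-$1$ argument: the spreading estimate identifies the events $A_x=\{\xi^{(\infty)}_x=\infty\}$ with a common $A$, and you then want $\P(A)\in\{0,1\}$ from ``extremality of $\mathcal{L}(\xi^{(\infty)})$''. You are right to flag this as the main obstacle: as written, ``unique limit from the minimal initial condition, hence extremal'' is not a theorem, and the sets you call dynamics-invariant need care ($\N_0^{\Z^d}$ is \emph{not} invariant for $\xi$ in general, as the paper notes near the start of Section~2.3). The idea can in fact be salvaged -- the limit from $\xi(0)\equiv 0$ is stochastically \emph{minimal} among invariant laws, and a split $\mu_\infty=q\,\delta_{\equiv\infty}+(1-q)\nu_\mathrm{fin}$ with $q\in(0,1)$ forces (after showing $\nu_\mathrm{fin}$ is itself invariant from the invariance of $\mu_\infty$ and of $\delta_{\equiv\infty}$) $\mu_\infty\preceq\nu_\mathrm{fin}$, hence $\delta_{\equiv\infty}\preceq\nu_\mathrm{fin}$, which is absurd -- but none of this is in your sketch, so there is a genuine gap here. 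The paper avoids the $0$-$1$ issue entirely with a concrete quantitative restart argument: taking $\eps=\P(\xi^{(\infty)}_0=\infty)$ and restarting from $\xi(n)$, it splits on $\{\xi^{(\infty)}_0<\infty\}$ (where monotonicity from $\mathbf{0}$ gives $\P(\xi_0(t)\ge K)\ge\eps-\delta$) vs.\ $\{\xi^{(\infty)}_0=\infty\}$ (where the spreading estimate gives $\to 1$), and shows $\P(\xi^{(\infty)}_0\ge K)\ge(1-\eps)(\eps-\delta)+\eps(1-\delta)>\eps$ whenever $\eps\in(0,1)$ and $\delta$ is small -- a contradiction. This is shorter and fully self-contained. (The paper also remarks that a Hewitt--Savage/coupling route is possible but lengthy; your comment that it is ``precluded'' is an overstatement.)

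\emph{$\P(\xi^{(\infty)}_0=0)>0$.} Here your generator/stationarity argument ($\int Lf_k\,d\pi=0$ with $f_k=\mathbf{1}_{\{\xi_0\le k\}}$, inducting to show $\pi(\xi_0=k)=0$ implies $\pi(\xi_0=k+1)=0$ since the only remaining contribution $(k+1)(1-p_{00})\mathbf{1}_{\{\xi_0=k+1\}}$ is nonnegative) is an elegant alternative to the paper's comparison-with-branching-random-walks argument. However, applying $\int L f\,d\pi=0$ to a bounded measurable indicator for a jump process with unbounded incoming jump rates is not automatic; you would first need a local finiteness statement for $\pi$ analogous to the paper's \eqref{xinfty} (e.g.\ $\sum_z p_{z0}\,\xi^{(\infty)}_z<\infty$ a.s.), which your proposal does not address. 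The paper establishes exactly this kind of control first (via a Borel--Cantelli comparison with independent killed walks) before exhibiting a positive-probability event on which $\xi'_0(1)=0$.

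In summary: the overall architecture is sound and the alternative ideas (extremality of the minimal invariant law; generator flow balance) are genuinely interesting, but the $0$-$1$ step is a real gap as written, and the stationarity identity needs the local-finiteness groundwork. The paper's restart argument and branching-random-walk comparison are both more elementary and close these holes directly.
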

\begin{proof} The monotonicity of $\xi$ on the state space $\mathbb{S}$ is inherited from its monotonicity 
on the space of finite configurations, $\mathbb{S}_{\rm fin}$, defined in \eqref{Sfin}. Given $\xi(0)\equiv 0$, we have $\xi(0)\preceq \xi(s)$ for any $s\geq 0$. It then follows that the law of $\xi(t)$ is stochastically non-decreasing in $t\geq 0$, and as $t\to\infty$, $\xi(t)$ converges in finite-dimensional distribution to a limit $\xi^{(\infty)}\in \mathbb{S}$.

We first assume i), that $\big\{{\cal L}(\xi_x(t)) :{t \ge 0} \big\}$ is tight for every $x\in\Z^d$. Then $\xi^{(\infty)}\in \N_0^{\Z^d}$ almost surely. We claim that the law ${\cal L}(\xi^{(\infty)})$ is stationary for the process $\xi$. Indeed, let $\xi'$ start with $\xi'(0)=\xi^{(\infty)}$. For any $s, t>0$, since ${\cal L}(\xi(s)) \preceq {\cal L}(\xi'(0))$, we must have ${\cal L}(\xi(s+t)) \preceq {\cal L}(\xi'(t))$. Letting $s\to\infty$ then shows that ${\cal L}(\xi^{(\infty)}) \preceq {\cal L}(\xi'(t))$. On the other hand, $\xi'(t)$ can be constructed as the monotone limit of $\xi^{'(n)}(t)$ with initial condition $\xi^{'(n)}_x(0):=\xi_x(n) 1_{\{|x|\leq n\}}$, where $\xi^{'(n)}(0) \in \mathbb{S}_{\rm fin}$ and $\xi^{'(n)}(0) \uparrow \xi'(0)=\xi^{(\infty)}$ under a suitable coupling of $(\xi(n))_{n\in\N}$ and $\xi^{(\infty)}$. Note that for all $n\in\N$, ${\cal L}(\xi^{'(n)}(t)) \preceq {\cal L}(\xi(n+t)) \preceq {\cal L}(\xi^{(\infty)})$. It then follows that ${\cal L}(\xi'(t))\preceq {\cal L}(\xi^{(\infty)})$. Therefore ${\cal L}(\xi'(t))={\cal L}(\xi^{(\infty)})$ for all $t\geq 0$, and ${\cal L}(\xi^{(\infty)})$ is a stationary law for $\xi$.
\medskip

In order to show that if i) fails, ii) must hold, we use monotonicity and a simple ``re-start'' argument. 
One can alternatively prove that claim via an 
explicit, though lengthy to formulate, coupling construction and the Hewitt-Savage-0-1-law, analogous 
to \cite[Sect.~3.2]{B03}. 

Let us now assume that i) fails, so that $\{{\cal L}(\xi_x(t)): t\geq 0\}$ is not tight for some $x\in \Z^d$. Then $\P(\xi^{(\infty)}_x=\infty)=\eps$
for some $\eps\in (0,1]$. Since for any $y\in\Z^d$, there is a fixed positive probability that a particle from $x$ will move to $y$ in unit time without undergoing any branching or death, we conclude that we must have $\P(\xi^{(\infty)}_y=\infty)\geq \eps$ for all $y\in\Z^d$. Switching $x$ and $y$ then shows that $\P(\xi^{(\infty)}_x=\infty)= \eps$ for all $x\in\Z^d$. We will prove $\eps=1$ by contradiction.

First note that since $\xi_0(t)$ converges in law to $\xi_0^{(\infty)}$, for any $\delta>0$ and $K>0$, we have 
\begin{equation}\label{tight1}
\P(\xi_0(t) >K) \geq \eps-\delta \qquad \mbox{for all $t$ large enough}.
\end{equation}

Let $\xi^{'(n)}$ be a sequence of the $\xi$ process with initial condition $\xi^{'(n)}(0)=\xi(n)$, coupled in such a way that almost surely, $\xi^{'(n)}(0)\uparrow \xi^{(\infty)}$. Conditioned on a sequence of initial conditions $\xi^{'(n)}(0)$ satisfying $\xi^{'(n)}_0(0)\uparrow \xi_0^{(\infty)}<\infty$, which occurs with probability $1-\eps$, by monotonicity, we have ${\cal L}(\xi(t))\preceq {\cal L}(\xi^{'(n)}(t) | \xi^{'(n)}(0))$ for all $n\in\N$ and $t>0$. In particular, by \eqref{tight1}, we can choose $t$ large enough such that uniformly in $n\in\N$ and $\xi^{'(n)}(0)$,
$$
\P(\xi^{'(n)}_0(t)\geq K |\xi^{'(n)}(0))\geq \eps-\delta.
$$
On the other hand, conditioned on a sequence of initial conditions $\xi^{'(n)}(0)$ satisfying $\xi^{'(n)}_0(0)\uparrow \xi_0^{(\infty)}=\infty$, which occurs with probability $\eps$, we have $\xi^{'(n)}_0(t)\to \infty$ as $n\to\infty$ in probability, since there is a fixed probability for a particle to start from the origin and return to the origin at time $t$ without undergoing branching or death along the way. Combining the above two cases, we conclude that for all $n$ large enough, 
$$
\P(\xi^{'(n)}_0(t)\geq K) = \P(\xi_0(n+t)\geq K) \geq (1-\eps) (\eps-\delta) +\eps(1-\delta). 
$$
In particular, $\P(\xi^{(\infty)}_0 \geq K) \geq (1-\eps) (\eps-\delta) +\eps(1-\delta)>\eps$ if $\eps\in (0,1)$ and $\delta$ is chosen sufficiently small. Since $K$ can be chosen arbitrarily large, this implies that $\P(\xi^{(\infty)}_0 =\infty) >\eps$, which is a contradiction. Therefore when i) fails, we must have $\eps=1$, i.e., $\xi_x(t)\to\infty$ in probability for all $x\in\Z^d$.
\medskip

Lastly, we show that in case i), $\P(\xi^{(\infty)}_0=0)>0$. Recall that $p_{xy}(t)$ denotes the transition probability kernel of a random walk with jump kernel $p$. First we claim that: 
\begin{equation}\label{xinfty}
\mbox{For all } t>0, \quad \sum_z p_{z0}(t) \xi^{(\infty)}_z <\infty \qquad \mbox{almost surely}. 
\end{equation}
Let us consider the stationary process $\xi'$ with $\xi'(0)=\xi^{(\infty)}$. If \eqref{xinfty} fails, then for some $t_0>0$, $\sum_z p_{z0}(t_0) \xi^{(\infty)}_z =\infty$ with positive probability. Let us fix an initial configuration $\xi'(0)$ with $\sum_z p_{z0}(t_0) \xi'_z(0) =\infty$. With probability $e^{-(1+\gamma)}$, the immigration source $X$ in the locally size-biased system $\widetilde \xi'$ does not move and has no immigration during the time interval $[0,1]$. Conditioned on this event, we have $\xi'(t)=\widetilde \xi'(t) -\delta_0$ for $t\in [0,1]$, and the $\xi'$ system is easily seen to stochastically dominate a collection of independent random walks $\xi''$ with initial condition $\xi''(0):=\xi'(0)$, where each walk jumps with rate $1$ and kernel $p$ and dies with rate $\gamma$, regardless of whether it is alone or not. A Borel-Cantelli argument then shows that given $\sum_z p_{z0}(t_0) \xi''_z(0) =\infty$, we must have $\xi''_0(t_0)=\infty$ a.s., and hence $\xi'_0(t_0)=\infty$ a.s.. It follows that $\P(\xi'_0(t_0)=\infty)=\P(\xi^{(\infty)}_0=\infty)>0$, which is a contradiction. Therefore \eqref{xinfty} must hold.
\smallskip

Given $\xi'(0)$ with $\sum_z p_{z0}(1) \xi'_z(0) <\infty$, we now show that $\P(\xi'_0(1)=0 | \xi'(0))>0$, which implies $\P(\xi^{(\infty)}_0=0)>0$ by the stationarity of $\xi'$. Again, let us restrict to the event that the immigration source does not move or have immigration during the time interval $[0,1]$. Conditioned on this event, the $\xi$ is system is easily seen to be stochastically dominated by a collection of independent branching random walks $\xi'''$ with initial condition $\xi'''(0):=\xi'(0)$, where each walk jumps with rate $1$ and kernel $p$ and branches into two with rate $\gamma$. We can choose $L$ large enough such that the expected number of particles that originate from outside $[-L,L]$ at time 0 and is at $0$ at time $1$, is less than $1$, so that with positive probability, no particle originating from outside $[-L,L]$ will be at the origin at time $1$. Clearly there is also positive probability that none of the particles originating from $[-L,L]$ will have an offspring at the origin at time $1$. Therefore we have $\P(\xi'''_0(1)=0 | \xi'''(0))>0$, and the same holds for $\xi'$. 
\end{proof}

\subsection{Moment computations for $\xi$}
\label{subsect:momentcomp}
We now derive bounds on the first and second moments of $\xi_x(t)$. 
Note that we require the results discussed in this section only for $\xi(0) \in \mathbb{S}_\mathrm{fin}$
(in fact, only for $\xi_\cdot(0) \equiv 0$), so that $\xi(t) \in \mathbb{S}_\mathrm{fin}$ for all $t\ge 0$ 
and the expressions involving the generator will always be well-defined.
\smallskip

To keep track of the joint positions of two particles in the $\xi$ system, we introduce two dependent random walks $(\widehat{X})_{t\geq 0}$ and $(\widehat{X}'(t))_{t \ge 0}$ on $\Z^d$, such that
\[
(\widehat{X}(t), \widehat{X}'(t)) = \big( \widehat Y(t)-\widehat Y^{(0)}(t), 
\widehat Y'(t)-\widehat Y^{(0)}(t) \big)
\]
where $\widehat Y^{(0)}, \widehat Y, \widehat Y'$ are three
independent random walks with jump rate $1$ and jump kernel $(p_{z})_{z\in\Z^d}$.  The walks $\widehat Y$ and $\widehat Y'$ represent the independent motions of two particles in the $\widetilde \xi$ system, which is the stochastic representation of the locally size-biased branching random walks with a moving immigration source, while $\widehat Y^{(0)}$ represents the motion of the immigration source in $\widetilde \xi$. 

Note that individually, both $(\widehat{X}(t))_{t \ge 0}$ and $(\widehat{X}'(t))_{t \ge 0}$ are random walks with jump rate $2$ and jump kernel $(\frac{p_{-z}+p_z}{2})_{z\in\Z^d}$. Its generator is given by
\[
(\widehat{L}^{(1)} f)_x := \sum_z (p_{-z}+p_z)(f_{x+z}-f_x).
\] 
Let $\widehat{p}_{xy}(t) := \P_x(\widehat{X}(t)=y)$ denote its transition probability kernel, with $\widehat{p}_0:=\widehat{p}_{00}$. Let $\widehat{L}^{(1),*}$ denote the generator of
the time-reversed random walk for $\widehat X$, which has the same distribution as $-\widehat{X}$, with transition kernel $\widehat{q}_{xy}(t) := \widehat{p}_{yx}(t) = \widehat{p}_{xy}(t)$ by symmetry. 

Note that jointly $(\widehat{X}(t), \widehat{X}'(t))_{t \ge 0}$ is a random walk on $\Z^{2d}$ with generator 
\[
(\widehat{L}^{(2)}f)_{x,y} := \sum_z p_{z} (f_{x+z,y} + f_{x,y+z} +
f_{x-z,y-z} - 3f_{x,y}).
\]
Let 
\begin{equation} 
  \label{eq:phat2t}
  \widehat{p}^{(2)}_{(x,y),(w,z)}(t) := \P_{(x,y)}\big((\widehat{X}(t), \widehat{X}'(t))=(w,z)\big)
\end{equation} 
denote its transition probability kernel. 
Let $\widehat{L}^{(2),*}$ denote the generator for the time-reversal of $(\widehat{X}, \widehat{X}')$, which has the same distribution as 
$(-\widehat{X}, -\widehat{X}')$, with transition kernel $\widehat{q}^{(2)}_{(x,y), (w,z)}(t) := \widehat{p}^{(2)}_{(w,z), (x,y)}(t)$.
\smallskip

\begin{lem} 
\label{lem:twomoments} The first two moments of  $\xi_\cdot(t)$ admit the following representation:
\begin{itemize}
\item[\rm (1)]  Assume that $\E[\xi_y(0)] < \infty$ for all $y\in\Z^d$. Then for $t\ge 0$, $x\in\Z^d$
\begin{align} 
  \label{eq:mean1}
  \E[\xi_x(t)] & = \sum_y \P_y\big( \widehat{X}(t) = x \big)
  \E[\xi_y(0)]
  + \gamma \int_0^t \P_0\big( \widehat{X}(t-s) = x \big) \P(\xi_0(s) = 0) \, ds \\
  \label{eq:mean2}
  & = \sum_y \E[\xi_y(0)] \widehat{p}_{yx}(t) + \gamma \int_0^t \P(\xi_0(s) =
  0) \widehat{p}_{0x}(t-s) \, ds.
\end{align}

\item[\rm (2)]  Assume that $\E[\xi_y(0)^2] < \infty$ for all $y\in\Z^d$. Then for $t\ge 0$, $x, y\in\Z^d$
\begin{align} 
  \E \big[ & \xi_x(t)(\xi_y(t) - \delta_{xy})\big] \notag \\
  = & \sum_{x',y'} \P_{(x',y')}\big(
  (\widehat{X}(t),\widehat{X}'(t)) = (x,y) \big)
  \E\big[\xi_{x'}(0)(\xi_{y'}(0) - \delta_{x'y'})\big] \notag \\
  & + \gamma \int_0^t \sum_{z'\neq 0} \P_{(z',z')}\big(
  (\widehat{X}(t-s),\widehat{X}'(t-s)) = (x,y) \big)
  \P(\xi_{z'}(s) = 1) \, ds \notag \\
  & + \gamma \int_0^t \sum_{y'\neq 0} \P_{(0,y')}\big(
  (\widehat{X}(t-s),\widehat{X}'(t-s)) = (x,y) \big) 
  \E\big[ 1_{\{\xi_0(s)=0\}} \xi_{y'}(s) \big] \, ds \notag \\
  \label{eq:2ndfmom}
  & + \gamma \int_0^t \sum_{x'\neq 0} \P_{(x',0)}\big(
  (\widehat{X}(t-s),\widehat{X}'(t-s)) = (x,y) \big) 
  \E\big[ 1_{\{\xi_0(s)=0\}} \xi_{x'}(s) \big] \, ds  \\
  = & \sum_{x',y'} \E\big[\xi_{x'}(0)(\xi_{y'}(0) - \delta_{x'y'})\big]
  \widehat{p}^{(2)}_{(x',y'),(x,y)}(t) 
  + \gamma \int_0^t \sum_{z\neq 0} \P(\xi_z(s) = 1) \widehat{p}^{(2)}_{(z,z),(x,y)}(t-s) \, ds
  \notag \\
  & + \gamma \int_0^t \sum_{z\neq 0} \E\big[ 1_{\{\xi_0(s)=0\}} \xi_{z}(s) \big] 
  \big\{ \widehat{p}^{(2)}_{(0,z),(x,y)}(t-s) + \widehat{p}^{(2)}_{(z,0),(x,y)}(t-s)\big\} \, ds .
  \label{eq:2ndfmom1}
\end{align}
\end{itemize}
\end{lem}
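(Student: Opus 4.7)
My plan is to derive both representations as Duhamel-form solutions of the Kolmogorov forward equations for the corresponding moments of $\xi_x(t)$. Since under our assumption $\xi(0) \in \mathbb{S}_{\rm fin}$ (in fact $\xi(0) \equiv 0$) the configuration $\xi(t)$ stays in $\mathbb{S}_{\rm fin}$ almost surely, all generator evaluations on the coordinate and pair functions are finite; first and second moments grow at worst linearly/quadratically in $t$ because branching is critical and immigration has rate $\gamma$, so a standard Dynkin/truncation argument is licit. The key conceptual input is that the walks $\widehat{X}$ and $(\widehat{X}, \widehat{X}')$ are engineered precisely so that $L_\mathrm{rw} + L_\mathrm{mf}$ acts on $\xi_x$ (respectively on a suitable quadratic test function of $\xi$) as $\widehat{L}^{(1)}$ (respectively as $\widehat{L}^{(2)}$ in the dual/forward sense on the target labels).

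For the first moment I would apply $L = L_\mathrm{rw} + L_\mathrm{br} + L_\mathrm{im} + L_\mathrm{mf}$ to the coordinate function $\xi \mapsto \xi_x$ term by term. Critical binary branching gives $L_\mathrm{br}\xi_x = 0$, immigration gives $L_\mathrm{im}\xi_x = \gamma \delta_{0x}\,1_{\{\xi_0 = 0\}}$, and using the convention $p_{ab} = p_{b-a}$ to rewrite $\sum_a \xi_a p_{ax} = \sum_z p_{-z}\xi_{x+z}$, one finds
\[
L_\mathrm{rw}\xi_x + L_\mathrm{mf}\xi_x = \sum_z (p_z + p_{-z})\xi_{x+z} - 2\xi_x,
\]
which is exactly $\widehat{L}^{(1)}$ applied (with respect to the spatial variable $x$) to the configuration $\xi_\cdot$. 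Taking expectations yields a linear inhomogeneous equation for $m_t(x) := \E[\xi_x(t)]$, and Duhamel's formula against the semigroup $(\widehat{p}_{\cdot\cdot}(t))_{t\ge 0}$ of $\widehat{X}$ produces \eqref{eq:mean1}--\eqref{eq:mean2}.

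For the second moment I would apply $L$ to $g_{xy}(\xi) := \xi_x(\xi_y - \delta_{xy})$ and verify the analogous identification with the two-particle walk. The three translation operations making up $\widehat{L}^{(2)}$ match the generator of $\xi$ cleanly: the two shifts by $+z$ in a single coordinate arise from $L_\mathrm{rw}$ (the particle living at the $x$-label jumps by $z$; the particle at the $y$-label jumps by $z$), and the joint shift by $-z$ in both coordinates arises from $L_\mathrm{mf}$ (the immigration source jumps). The precise $-\delta_{xy}$ correction in $g_{xy}$ is what cancels the on-diagonal singular contributions produced by the quadratic $\delta$-products when expanding $L_\mathrm{rw}(\xi_x\xi_y)$, leaving behind only the genuine source terms. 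From $L_\mathrm{br}$ one obtains $\E[L_\mathrm{br} g_{xy}] = \gamma\, 1_{\{x = y \neq 0\}}\P(\xi_x(t) = 1)$, a source supported on the diagonal $(z,z)$ with $z \neq 0$; from $L_\mathrm{im}$ one obtains $\gamma\delta_{0x}\E[1_{\{\xi_0(t) = 0\}}\xi_y(t)] + \gamma\delta_{0y}\E[1_{\{\xi_0(t) = 0\}}\xi_x(t)]$, which automatically vanishes at $x = y = 0$ because $\xi_0\, 1_{\{\xi_0 = 0\}} \equiv 0$, and hence gives sources exactly at $(0, z)$ and $(z, 0)$ with $z \neq 0$. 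Duhamel against the two-particle kernel $\widehat{p}^{(2)}_{\cdot,(x,y)}$ then yields \eqref{eq:2ndfmom}--\eqref{eq:2ndfmom1}. The main technical obstacle is precisely this last algebraic identification: one must carefully book-keep every $\delta$-contraction between the free labels $x, y$ and the summation variables in $L_\mathrm{rw}$ (especially in the regime $x = y$, where the ``self-interaction'' of a single particle has to be subtracted off), and then verify that the residues combine into exactly the stated source structure at $(z,z)$, $(0,z)$, $(z,0)$ with no spurious terms.
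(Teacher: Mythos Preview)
Your proposal is correct and follows essentially the same route as the paper: apply $L = L_\mathrm{rw} + L_\mathrm{br} + L_\mathrm{im} + L_\mathrm{mf}$ to $\xi_x$ and to $g_{xy}(\xi)=\xi_x(\xi_y-\delta_{xy})$, identify $(L_\mathrm{rw}+L_\mathrm{mf})$ acting on the spatial labels with the (adjoint) random-walk generators, read off the source terms you list, and solve by Duhamel against $\widehat{p}$ and $\widehat{p}^{(2)}$. The only cosmetic point is that the paper writes the identification via the time-reversed generators $\widehat{L}^{(1),*}$, $\widehat{L}^{(2),*}$ (your ``dual/forward sense'' caveat); for the one-particle walk this is immaterial by symmetry, and your Duhamel kernel $\widehat{p}^{(2)}_{\cdot,(x,y)}$ is the correct object in the two-particle case.
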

\br\rm 
Note that $\xi_x(t)(\xi_y(t) - \delta_{xy})$ counts the number of pairs of particles, with the first particle from position $x$ and the second from position $y$ at time $t$. The terms in the sum in \eqref{eq:2ndfmom} are respectively contributions from the following cases: the pair of particles sampled from $x$ and $y$ at time $t$ come from distinct ancestors at time 0; the pair of particles come from the same ancestor; the pair of particles come from distinct ancestors with at least one ancestor being a particle added at the immigration source at the origin. 
\er
\begin{proof} 
  \noindent (1) Let $f_x(t) := \E\big[ \xi_x(t) \big]$. It is easily seen that (cf.\ \eqref{eq:geneqmom1} below) $f$ solves 
  \begin{align}\label{duhamel1}
    \frac{\partial}{\partial t}f_x(t) = 
    \big(\widehat{L}^{(1), *}f_\cdot(t)\big)_x + \gamma  \delta_{x0} \P(\xi_0(t)=0).
  \end{align}
  Applying Duhamel's principle for semilinear equations (e.g.\ \cite[Thm.~6.1.2]{P83}) and using the fact that the random walk with generator $\widehat{L}^{(1), *}$ has the same distribution as the time reversal of $\widehat X$, we obtain \eqref{eq:mean1}.  
  \smallskip

  \noindent (2) Let $f_{x,y}(t):= \E\big[\xi_x(t)(\xi_y(t)-\delta_{xy})\big]$, which is easily seen to solve (cf.\ \eqref{eq:geneqmom2} below)
    \begin{equation}\label{duhamel2}
    \begin{aligned}
  \frac{\partial}{\partial t}f_{x,y}(t) 
  = & \big(\widehat{L}^{(2), *}f_{\cdot,\cdot}(t) \big)_{x,y} 
  + \gamma \delta_{xy}(1-\delta_{x0}) \P(\xi_x(t)=1) \\
  & + \gamma \Big( \delta_{x0} \E\big[ 1_{\{\xi_0(t)=0\}} \xi_y(t) \big] 
  + \delta_{y0} \E\big[ 1_{\{\xi_0(t)=0\}} \xi_x(t) \big] \Big). 
  \end{aligned}
  \end{equation}
  Again, applying Duhamel's principle and using the fact that the random walk with generator $\widehat{L}^{(2), *}$ has the same distribution as the time reversal of $(\widehat X, \widehat X')$,  we obtain \eqref{eq:2ndfmom}.
\end{proof}

Using Lemma \ref{lem:twomoments}, we now bound the first two moments of $\xi_\cdot(t)$.
\begin{lem} 
\label{lem:2ndmom}
Let $\xi_\cdot(0) \equiv 0$. We have 
  \begin{align}
    \label{eq:bdmom1}
    & \E\big[\xi_x(t)\big] \le \,
    \gamma \int_0^t 
    \widehat{p}_{0x}(u) \, du, \\
    \label{eq:bdmom2}
    & \E\big[\xi_x(t)( \xi_y(t) - \delta_{xy}) \big] \notag \\ 
    & \hspace{1em} \le \gamma^2 \int_0^t \int_0^s \sum_{z\neq 0} \widehat{p}_{0z}(u) 
    \big\{ \widehat{p}^{(2)}_{(z,z),(x,y)}(t-s) + \widehat{p}^{(2)}_{(0,z),(x,y)}(t-s) 
    + \widehat{p}^{(2)}_{(z,0),(x,y)}(t-s) \big\} \, du \, ds.
  \end{align}
\end{lem}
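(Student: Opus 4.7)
\textbf{Proof plan for Lemma~\ref{lem:2ndmom}.} The plan is to specialise the exact representations from Lemma~\ref{lem:twomoments} to the initial condition $\xi_\cdot(0)\equiv 0$, and then estimate the resulting factors in a completely routine way. Since no initial particles are present, every term involving $\E[\xi_{y}(0)]$ or $\E[\xi_{x'}(0)(\xi_{y'}(0)-\delta_{x'y'})]$ vanishes, which removes the ``initial-condition'' contributions from both \eqref{eq:mean2} and \eqref{eq:2ndfmom1}. What remains are contributions coming purely from the immigration at the origin.

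For the first-moment bound \eqref{eq:bdmom1}, I start from \eqref{eq:mean2}, drop the initial-condition sum, bound the indicator probability by $\P(\xi_0(s)=0)\le 1$, and change variables $u=t-s$ in the time integral; this immediately yields
\[
\E[\xi_x(t)] = \gamma\int_0^t \P(\xi_0(s)=0)\,\widehat p_{0x}(t-s)\,ds \le \gamma \int_0^t \widehat p_{0x}(u)\,du.
\]

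For the second-moment bound \eqref{eq:bdmom2}, I start from \eqref{eq:2ndfmom1} (with the initial-condition sum gone) and estimate each of the three remaining summands uniformly in $s\in[0,t]$: use $\P(\xi_z(s)=1)\le \E[\xi_z(s)]$ for the ``same-ancestor'' term, and $\E[1_{\{\xi_0(s)=0\}}\xi_z(s)]\le \E[\xi_z(s)]$ for the two ``immigration + other particle'' terms. Substituting the first-moment bound $\E[\xi_z(s)]\le \gamma\int_0^s \widehat p_{0z}(u)\,du$ just established, and noting that the restriction to $z\neq 0$ is preserved in all three sums, yields
\[
\E\big[\xi_x(t)(\xi_y(t)-\delta_{xy})\big] \le \gamma^2 \int_0^t \!\int_0^s \sum_{z\neq 0} \widehat p_{0z}(u)\bigl\{ \widehat p^{(2)}_{(z,z),(x,y)}(t-s) + \widehat p^{(2)}_{(0,z),(x,y)}(t-s) + \widehat p^{(2)}_{(z,0),(x,y)}(t-s)\bigr\} du\, ds,
\]
which is exactly \eqref{eq:bdmom2}.

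There is no real obstacle here: all the analytic content is already packaged into Lemma~\ref{lem:twomoments} via Duhamel's principle, and this lemma is just a matter of discarding the vanishing initial-condition terms and replacing each occurrence of $\P(\xi_z(s)=1)$ or $\E[1_{\{\xi_0(s)=0\}}\xi_z(s)]$ by the corresponding unconditional first moment, to which the bound from part~(1) is then applied. The only minor bookkeeping steps worth mentioning are the change of variable $u=t-s$ in the first moment bound, and checking that Fubini is applicable when iterating the two time integrals; both are immediate since all integrands are nonnegative.
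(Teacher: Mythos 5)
Your proposal is correct and is essentially the same proof as the paper's: specialise the exact moment formulas from Lemma~\ref{lem:twomoments} to $\xi_\cdot(0)\equiv 0$, bound $\P(\xi_0(s)=0)\le 1$ for the first moment, and for the second moment insert $\P(\xi_{z}(s)=1)\le\E[\xi_z(s)]$ and $\E[1_{\{\xi_0(s)=0\}}\xi_z(s)]\le\E[\xi_z(s)]$ together with the just-established first-moment bound.
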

\begin{proof} Note that \eqref{eq:bdmom1} follows from \eqref{eq:mean2} in Lemma~\ref{lem:twomoments},
using the trivial bound $\P(\xi_0(s) = 0) \le 1$.

To verify \eqref{eq:bdmom2}, we insert the bounds 
$\P(\xi_{z'}(s) = 1) \le \E[\xi_{z'}(s)]$, 
$\E\big[ 1_{\{\xi_0(s)=0\}} \xi_{y'}(s) \big] \le \E[\xi_{y'}(s)]$, 
$\E\big[ 1_{\{\xi_0(s)=0\}} \xi_{x'}(s) \big] \le \E[\xi_{x'}(s)]$, 
together with \eqref{eq:bdmom1}, into \eqref{eq:2ndfmom1} in
Lemma~\ref{lem:twomoments}. 
\end{proof}

\begin{lem} 
  \label{lem_2ndmombound} 
  We have 
  \begin{align}
    \label{eq:2ndmombound}
    \E\big[ (\xi_0(t))^2 \,\big|\, \xi_\cdot(0) \equiv 0 \big] \le
    3\gamma^2 \Big( \int_0^t \widehat{p}_0(s) \,ds \Big)^2 +
    \gamma \int_0^t \widehat{p}_0(s) \,ds
    \quad \mbox{for all } t>0. 
  \end{align}
\end{lem}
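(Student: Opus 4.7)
The plan is to decompose $\E[(\xi_0(t))^2] = \E[\xi_0(t)(\xi_0(t)-1)] + \E[\xi_0(t)]$ and apply Lemma~\ref{lem:2ndmom} to each piece. The linear summand on the right-hand side of \eqref{eq:2ndmombound} is then immediate from \eqref{eq:bdmom1}.

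For the factorial second moment, I apply \eqref{eq:bdmom2} with $x=y=0$. The crucial step is to show
\[
\sum_{z\neq 0} \widehat{p}^{(2)}_{(z,z),(0,0)}(r) \le \widehat{p}_0(r), \quad \sum_{z\neq 0} \widehat{p}^{(2)}_{(0,z),(0,0)}(r) \le \widehat{p}_0(r), \quad \sum_{z\neq 0} \widehat{p}^{(2)}_{(z,0),(0,0)}(r) \le \widehat{p}_0(r)
\]
for every $r \ge 0$. Using the joint representation $(\widehat{X},\widehat{X}')=(\widehat{Y}-\widehat{Y}^{(0)},\widehat{Y}'-\widehat{Y}^{(0)})$ and conditioning on the common walk $\widehat{Y}^{(0)}(r)$, independence of the three walks gives
\[
\widehat{p}^{(2)}_{(z,z),(0,0)}(r) = \sum_y p_{0,y}(r)\, p_{z,y}(r)^2, \qquad \widehat{p}^{(2)}_{(0,z),(0,0)}(r) = \widehat{p}^{(2)}_{(z,0),(0,0)}(r) = \sum_y p_{0,y}(r)^2\, p_{z,y}(r).
\]
Summing over $z\neq 0$ and invoking the identities $\sum_z p_{z,y}(r) = 1$ and $\sum_z p_{z,y}(r)^2 = \widehat{p}_0(r)$ (the latter is simply $\P(\widehat{Y}(r)=\widehat{Y}'(r))$ for two independent copies of $\widehat{Y}$), each of the three sums reduces to $\widehat{p}_0(r) - \sum_y p_{0,y}(r)^3 \le \widehat{p}_0(r)$.

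I also need the symmetric-walk bound $\widehat{p}_{0z}(u) \le \widehat{p}_0(u)$, which follows by Cauchy-Schwarz once one observes that $\widehat{X} \mathop{=}^d \widehat{Y}-\widehat{Y}'$ for two i.i.d.\ copies. Plugging both bounds into \eqref{eq:bdmom2}, the integrand is controlled by $3\,\widehat{p}_0(u)\widehat{p}_0(t-s)$, and a change of variable $v=t-s$ yields
\[
\int_0^t\!\int_0^s \widehat{p}_0(u)\,\widehat{p}_0(t-s)\,du\,ds = \iint_{u,v\ge 0,\, u+v\le t} \widehat{p}_0(u)\widehat{p}_0(v)\,du\,dv \le \Big(\int_0^t \widehat{p}_0(s)\,ds\Big)^2,
\]
which gives $\E[\xi_0(t)(\xi_0(t)-1)] \le 3\gamma^2 \big(\int_0^t \widehat{p}_0(s)\,ds\big)^2$ and completes the claim after combining with the linear term.

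The only non-routine step---and thus the main ``obstacle''---is the combinatorial computation bounding the three $\sum_{z\neq 0}\widehat{p}^{(2)}$ kernel sums; the bookkeeping is slightly different for $(z,z)$ versus $(0,z)$ and $(z,0)$, but once the three-walk factorisation is set up, each case reduces by the elementary identities $\sum_w p_{0,w}(r)=1$ and $\sum_w p_{0,w}(r)^2=\widehat{p}_0(r)$.
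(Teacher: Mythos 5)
Your proof is correct, and it follows the paper's overall strategy (split $\E[\xi_0(t)^2]$ into factorial moment plus mean, invoke Lemma~\ref{lem:2ndmom}, bound the kernel sums, then integrate), but your verification of the key random-walk kernel estimates is more elementary than the paper's. Where the paper proves $\widehat{p}_{0z}(u)\le\widehat{p}_0(u)\le\widehat{p}_0(u')$ ($u'\le u$) by Fourier inversion and then bounds $\sum_z\widehat{p}^{(2)}_{(z,z),(0,0)}(v)$ by time-reversal (identifying it with a diagonal-return probability $\widehat{p}_0(2v)$, then using time-monotonicity), you instead condition on the reference walk $\widehat{Y}^{(0)}(r)$, write the three kernel sums explicitly in terms of $p_{0,\cdot}(r)$, and close each one with the identities $\sum_z p_{z,y}(r)=1$ and $\sum_z p_{z,y}(r)^2=\widehat{p}_0(r)$; and you obtain $\widehat{p}_{0z}(u)\le\widehat{p}_0(u)$ by Cauchy--Schwarz via $\widehat{X}\mathop{=}^d\widehat{Y}-\widehat{Y}'$. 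This buys you a proof that avoids characteristic functions entirely and, in particular, does not need the time-monotonicity $\widehat{p}_0(v)\le\widehat{p}_0(u)$ that the paper's treatment of the diagonal sum ($\widehat{p}_0(2v)\le\widehat{p}_0(v)$) relies on; the paper's route, on the other hand, packages the estimate into the reusable fact \eqref{eq:RWtransfact}. Both are valid and lead to the same final bound after the change of variables $v=t-s$ confines the double integral to $\{u+v\le t\}\subset[0,t]^2$.
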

\begin{proof} Recall the following well-known fact about symmetric, continuous-time
  random walks:
  \begin{align}
    \label{eq:RWtransfact}
    \widehat{p}_{0,z}(v) \le \widehat{p}_0(v) \le \widehat{p}_0(u)
     \quad \text{for all } z \in \Z^d, \, 0 \le u \le v < \infty.
  \end{align}
  For completeness and lack of a point reference, this follows from
  Fourier inversion: For $k \in [0,2\pi)$ let
  $\varphi(k) := \sum_{x \in \Z} e^{i k x} (p_x+p_{-x})/2 \in [-1,1]$
  be the characteristic function of the jump distribution of
  $\widehat{X}$, then
  $\varphi_t(k) := \E_0[e^{i k \widehat{X}(t)}] = \exp\big(-2t (1-\varphi(k))\big) \in [0,1]$
  and \eqref{eq:RWtransfact} follows from
  $\widehat{p}_{0,z}(t) = \frac{1}{2\pi} \int_0^{2\pi} e^{-i k z} \varphi_t(k) \, dk
  = \frac{1}{2\pi} \int_0^{2\pi} \cos(k z) \varphi_t(k) \, dk$.
  \medskip
  
  Recalling the definition of $\widehat{p}^{(2)}$ from \eqref{eq:phat2t}
  and using the second inequality in \eqref{eq:RWtransfact} in the second line, we find 
  \begin{align*}
    \sum_{z \neq 0} \widehat{p}^{(2)}_{(z,z),(0,0)}(v) 
    &
      \le \sum_z \widehat{p}^{(2)}_{(z,z),(0,0)}(v)
      = \sum_z \widehat{q}^{(2)}_{(0,0),(z,z)}(v) 
      = \P\big( -\widehat{X}(v) = -\widehat{X}'(v) \big) \\
    & = \P\big( \widehat{Y}(v) - \widehat{Y}'(v) = 0 \big) 
      = \widehat{p}_0(2v) \le \widehat{p}_0(v),
  \end{align*}
  \begin{align*}
    \sum_{z \neq 0} \big( \widehat{p}^{(2)}_{(0,z),(0,0)}(v) + \widehat{p}^{(2)}_{(z,0),(0,0)}(v) \big) 
    \le \sum_z \big( \widehat{p}^{(2)}_{(0,0),(z,0)}(v) + \widehat{p}^{(2)}_{(0,0),(0,z)}(v) \big)
    = 2 \widehat{p}_0(v)
  \end{align*}
  Using this and the first inequality in \eqref{eq:RWtransfact}, we
  can bound \eqref{eq:bdmom2} from Lemma~\ref{lem:2ndmom} for $x=y=0$
  as follows:
  \begin{align*}
    \E\big[\xi_0(t)( \xi_0(t) - 1) \big]
    \le 3 \gamma^2 \int_0^t \int_0^s \widehat{p}_0(u) \widehat{p}_0(t-s) \, du \,ds
    \le 3 \gamma^2 \int_0^t \int_0^t \widehat{p}_0(u) \, \widehat{p}_0(v) \, du \,dv.
  \end{align*}
  Combining with \eqref{eq:bdmom1} yields \eqref{eq:2ndmombound}.

\end{proof}

\subsection{Long-time behaviour of $\xi$}
We now conclude the proof of Theorem~\ref{thm1} by ruling out tightness in Lemma \ref{lem:xidichot}.

\begin{lem} 
  \label{lem:xidiv}
  If $p$ satisfies Assumption~\ref{ass:p}, then starting from $\xi_\cdot(0) \equiv 0$, we have $\xi_x(t) \to
  \infty$ in probability as $t \to \infty$ for any $x \in \Z^d$.
\end{lem}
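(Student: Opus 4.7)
The plan is to rule out case (i) of the dichotomy in Lemma~\ref{lem:xidichot} by a second-moment argument, so that case (ii) --- which is exactly the conclusion we want --- must hold. I would argue by contradiction: assume that $\{\mathcal{L}(\xi_x(t)) : t \ge 0\}$ is tight for every $x$, and let $c := \P(\xi^{(\infty)}_0 = 0)$, which is strictly positive by Lemma~\ref{lem:xidichot}. Since $\mathcal{L}(\xi(t))$ is stochastically non-decreasing in $t$ with limit $\mathcal{L}(\xi^{(\infty)})$, the function $t\mapsto \P(\xi_0(t) = 0)$ is non-increasing with limit $c$, so $\P(\xi_0(s)=0) \ge c$ for every $s \ge 0$.

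Plugging this lower bound into the first-moment formula \eqref{eq:mean2} with $\xi(0)\equiv 0$, I obtain
\[
\E[\xi_0(t)] \;=\; \gamma \int_0^t \P(\xi_0(s)=0)\,\widehat{p}_0(t-s)\,ds \;\ge\; c\gamma\, I_t,
\qquad \text{where } I_t := \int_0^t \widehat{p}_0(u)\,du.
\]
Recurrence of $\widehat{X}$ from Assumption~\ref{ass:p} gives $I_t \to \infty$. Combining this lower bound on $\E[\xi_0(t)]$ with the upper bound \eqref{eq:2ndmombound} from Lemma~\ref{lem_2ndmombound}, the Paley--Zygmund inequality applied to $\xi_0(t)$ yields, for any $\lambda \in (0,1)$,
\[
\P\bigl(\xi_0(t) > \lambda \E[\xi_0(t)]\bigr)
\;\ge\; (1-\lambda)^2 \,\frac{(\E[\xi_0(t)])^2}{\E[\xi_0(t)^2]}
\;\ge\; (1-\lambda)^2 \,\frac{c^2 \gamma^2 I_t^2}{3\gamma^2 I_t^2 + \gamma I_t}
\;\xrightarrow{t\to\infty}\; (1-\lambda)^2 \,\frac{c^2}{3}.
\]

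For any fixed threshold $K$, since $\E[\xi_0(t)] \to \infty$, we have $\lambda \E[\xi_0(t)] > K$ for all sufficiently large $t$, hence $\P(\xi_0(t) > K) \ge \P(\xi_0(t) > \lambda \E[\xi_0(t)])$. Letting $t\to\infty$ and then $\lambda \to 0$ gives $\liminf_{t\to\infty} \P(\xi_0(t) > K) \ge c^2/3$ for every $K$. This contradicts tightness of $\{\mathcal{L}(\xi_0(t))\}_{t\ge 0}$: choosing $K$ large enough so that $\sup_t \P(\xi_0(t) > K) < c^2/6$ is impossible. Consequently case (i) of Lemma~\ref{lem:xidichot} cannot occur, so case (ii) holds and $\xi_x(t) \to \infty$ in probability for every $x \in \Z^d$.

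I do not anticipate a serious obstacle here: all ingredients are already assembled. The only subtlety is the lower bound on $\E[\xi_0(t)]$, which requires turning the positivity of $\P(\xi^{(\infty)}_0=0)$ (guaranteed by the dichotomy lemma) into a uniform-in-$s$ lower bound on $\P(\xi_0(s)=0)$; this is precisely what stochastic monotonicity in $t$ supplies, and without it the first moment formula would yield only the trivial upper bound and the second-moment method would not close.
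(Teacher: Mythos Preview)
Your proof is correct and follows essentially the same route as the paper: assume tightness, use $\P(\xi^{(\infty)}_0=0)>0$ from Lemma~\ref{lem:xidichot} together with the first-moment formula and recurrence to force $\E[\xi_0(t)]\to\infty$, then combine with the second-moment bound via Paley--Zygmund to contradict tightness. The only cosmetic difference is that you turn the positivity of $\P(\xi^{(\infty)}_0=0)$ into a uniform lower bound $\P(\xi_0(s)=0)\ge c$ via stochastic monotonicity, whereas the paper phrases this as an asymptotic equivalence $\E[\xi_0(t)]\sim b\gamma\int_0^t\widehat{p}_0(s)\,ds$.
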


\begin{proof} 
By Lemma~\ref{lem:xidichot}, it suffices to show that the family $(\xi_0(t))_{t \ge 0}$ is not tight.
We argue by contradiction: Assume that this is not the case, 
then we obtain from Lemma~\ref{lem:xidichot} that $(\xi_x(t))_{t\geq 0}$ must be tight for every $x\in\Z^d$, and 
$\xi(t)$ converges in distribution to a non-trivial equilibrium $\xi^{(\infty)}\in \N_0^{\Z^d}$. In particular, we have 
\begin{align} 
\label{eq:nonoccproplim}
\lim_{t\to\infty} \P(\xi_0(t)=0) = \P(\xi^{(\infty)}_0=0) =:b > 0.
\end{align}

Straightforward computation 
using \eqref{eq:nonoccproplim} (and \eqref{eq:mean1} in 
Lemma~\ref{lem:twomoments}) then yields 
\[ \E[\xi_0(t)] \sim b\gamma \int_0^t \widehat{p}_0(s) \,ds \to \infty
\quad \mbox{as } t\to\infty.
\] 
Combined with \eqref{eq:2ndmombound} from Lemma~\ref{lem_2ndmombound} and applying the Paley-Zygmund inequality,
we have
\begin{align} 
  \label{eq:PaleyZygmundbd}
  \inf_{t \ge 0} \P\big( \xi_0(t) \ge \tfrac12 \E[\xi_0(t)] \big) 
  \ge \inf_{t \ge 0} \frac14 \cdot \frac{\E[\xi_0(t)]^2}{\E[\xi_0(t)^2]} 
  > 0.
\end{align}
It follows that $(\xi_0(t))_{t\geq 0}$ is not tight because our assumption implies $\E[\xi_0(t)] \to \infty$, which contradicts the assumption that $(\xi_0(t))_{t\geq 0}$ is tight. Therefore $(\xi_0(t))_{t\geq 0}$ cannot be tight. 
\end{proof}

\begin{rmk} \rm 
  \label{rem:p.req.prop}


  A natural generalisation of the lonely lonely branching random
  walks is to consider SCBRW$_b$ (as defined in
  Section~\ref{sect:discussion}) with branching rate function $b(j) =
  \gamma 1_{j=j_*}$ for some $j_* \ge 2$ and $\gamma>0$.  It turns out
  that the arguments from Sections\ \ref{sect:Palm} and
  \ref{subsect:momentcomp} can be adapted in a fairly straightforward
  way to this case.  However, it seems not obvious how to then obtain
  the dichotomy between tightness and growth as in
  Section~\ref{subsect:dichotomy}.  Obviously, one could now not
  simply start the $\xi$ system from the empty configuration and
  starting from some other initial condition it is not a priori clear
  how to implement a restart argument.

  We believe that a suitable analogue of Theorem~\ref{thm1} holds 
  but we defer this to future research.
\end{rmk}

\begin{appendix} 
\bigskip

\noindent {\bf \Large Appendix}

\section{Generator computations for the moments}
For completeness, we include here the generator calculations used in the proof of Lemma \ref{lem:twomoments}.

Recall the different components of the generator for $\xi$ from \eqref{xigen}. To derive \eqref{duhamel1} for $f_x(t):= \E[\xi_x(t)]$, let $F_x(\xi) := \xi_x$. We have 
  \begin{align*} 
    \big(L_{rw} + L_{mf}\big)F_x(\xi) & = \Big(\sum_y \xi_y p_{yx} - \xi_x \Big) 
    + \sum_z p_z \big(\xi_{x+z} - \xi_x \big) \\
    & = \sum_z (p_z+p_{-z}) \big( \xi_{x+z} - \xi_x) = \big(\widehat{L}^{(1), *} F_\cdot(\xi)\big)_x, \\
    L_{im} F_x(\xi) & = \gamma \delta_{x0}  1_{\{\xi_0=0\}}, \quad 
    L_{br} F_x(\xi) = 0,
  \end{align*}
  hence 
  \begin{align} 
    \label{eq:geneqmom1}
    L F_x(\xi) & = \big(\widehat{L}^{(1), *}F_\cdot(\xi)\big)_x + \delta_{x0} \gamma 1_{\{\xi_0=0\}},
  \end{align}
  which implies that $f_x(t)=\E[F_x(\xi(t))]$ satisfies the equation \eqref{duhamel1}.  
  \smallskip

To derive \eqref{duhamel2} for $f_{x,y}(t):= \E\big[\xi_x(t)(\xi_y(t)-\delta_{xy})\big]$, let $F_{x,y}(\xi) := \xi_x(\xi_y-\delta_{xy})$. We have 
  \begin{align*} 
    L_{rw} F_{x,y}(\xi) & = \sum_{v,w} \xi_v p_{vw} \Big[ \big( \xi_x + \delta_{xw}-\delta_{xv}\big) 
    \big(\xi_y - \delta_{xy} + \delta_{yw} - \delta_{yv} \big) 
    - \xi_x(\xi_y - \delta_{xy}) \Big] \\
    & = \sum_{v,w} \xi_v p_{vw} \Big[ \xi_x(\delta_{yw} - \delta_{yv}) + \xi_y(\delta_{xw}-\delta_{xv}) 
    + (\delta_{xw}-\delta_{xv}) (\delta_{yw}-\delta_{yv} - \delta_{xy}) \Big] \\
    & = \xi_x \sum_v \xi_v p_{vy} - \xi_x \sum_w \xi_y p_{vy} 
    + \xi_y \sum_v \xi_v p_{vx} -\xi_y \sum_w \xi_x p_{xw} \\
    & \hspace{1.5em} + \delta_{xy} \sum_v \xi_v p_{vx} - \xi_y p_{yx} 
    - \delta_{xy} \sum_v \xi_v p_{vx} 
    - \xi_x p_{xy} + \delta_{xy} \sum_w \xi_x p_{xw} + \delta_{xy} \sum_w \xi_x p_{xw} \\
    & = \sum_v p_{vy} \big( \xi_x \xi_v - \xi_x\xi_y \big) 
    + \sum_v p_{vx} \big( \xi_v \xi_y - \xi_x\xi_y \big)
    - \xi_y p_{yx} - \xi_x p_{xy} + 2 \delta_{xy} \xi_x \\
    & = \sum_v p_{vy} \big( \xi_x (\xi_v - \delta_{vx}) - \xi_x(\xi_y-\delta_{xy}) \big) 
    + \sum_v p_{vx} \big( \xi_v (\xi_y - \delta_{vy}) - \xi_x(\xi_y-\delta_{xy}) \big) \\
    & = \sum_v p_{vy} \big( F_{x,v}(\xi) + F_{v,y}(\xi) - 2 F_{x,y}(\xi) \big) \\
    & = \sum_z p_{-z} \big( F_{x,y+z}(\xi) + F_{x+z,y}(\xi) - 2 F_{x,y}(\xi) \big)
  \end{align*}
  where we used the fact that $p$ is doubly stochastic. 
  Furthermore
  \begin{align*} 
    L_{mf} F_{x,y}(\xi) & = \sum_z p_{-z} \big( F_{x-z,y-z}(\xi) - F_{x,y}(\xi) \big), \\
    \intertext{hence}
    \big(L_{rw} + L_{mf}\big)F_{x,y}(\xi) & = \big(\widehat{L}^{(2), *}F_{\cdot,\cdot}(\xi) \big)_{x,y} .
  \end{align*}
  We also have 
  \begin{align*}
    L_{im} F_{x,y}(\xi) & = \gamma 1_{\{\xi_0=0\}} \big( \delta_{x0} \xi_y + \delta_{y0} \xi_x \big), 
    \quad \\
    L_{br} F_{x,y}(\xi) & = \gamma 1_{\{\xi_x=1\}} \delta_{xy}(1-\delta_{x0}) 
    \frac12 \Big( (\xi_x+1)\xi_x + (\xi_x-1)(\xi_x-2) - 2\xi_x(\xi_x-1) \Big) \\
    & = \gamma 1_{\{\xi_x=1\}} \delta_{xy}(1-\delta_{x0}).
  \end{align*}
  Altogether we obtain 
  \begin{align}
    \label{eq:geneqmom2}
    L F_{x,y}(\xi) = \big(\widehat{L}^{(2), *}F_{\cdot,\cdot}(\xi) \big)_{x,y} + 
    \gamma 1_{\{\xi_x=1\}} \delta_{xy}(1-\delta_{x0}) 
    + \gamma 1_{\{\xi_0=0\}} \big( \delta_{x0} \xi_y + \delta_{y0} \xi_x \big) .
  \end{align}
It then follows that that $f_{x,y}(t):= \E\big[F_{x,y}(\xi(t))]$ satisfies the equation \eqref{duhamel2}. 

\section{Auxiliary computations for Section~\ref{subsect:DoobTransform}}
\label{sect:DoobTransform-aux}

\emph{Proof that $h$ from \eqref{eq:h.eta.t} is space-time harmonic for
$(\eta_t)_{0 \le t \le T}$} :

Note that 
\begin{align}
  h(\eta^{x\to y},t) & = h(\eta,t) + p_{y,x_0}(T-t) - p_{x,x_0}(T-t), \\
  h(\eta^{+x},t) & = h(\eta,t) + p_{x,x_0}(T-t), \quad h(\eta^{-x},t)  = h(\eta,t) - p_{x,x_0}(T-t)
\end{align}
and 
\begin{align}
\frac{\partial}{\partial t} h(\eta,t) & = - \sum_x \eta_x \frac{\partial p_{x,x_0}}{\partial t}(T-t) 
= - \sum_x \eta_x \sum_z p_{xz} \big( p_{z,x_0}(T-t) - p_{x,x_0}(T-t) \big)
\end{align}
by Kolmogorov's backward equation. 
Thus 
\begin{align}
  & \sum_{x,y} \eta_x p_{xy} \big( h(\eta^{x\to y},t) - h(\eta,t) \big) 
  + \frac{\gamma}{2} \sum_x 1_{\{\eta_x=1\}} \big( h(\eta^{+x},t) + h(\eta^{-x},t) - 2 h(\eta,t) \big) 
  + \frac{\partial}{\partial t} h(\eta,t) 
  \notag \\
  & = \sum_{x,y} \eta_x p_{xy} \big( p_{y,x_0}(T-t) - p_{x,x_0}(T-t) \big) + 0  
  - \sum_x \eta_x \sum_z p_{xz} \big( p_{z,x_0}(T-t) - p_{x,x_0}(T-t) \big) = 0.
  \label{eq:hspacetimeharmonic}
\end{align}
\medskip

\noindent
\emph{Proof of the form of $\widehat{L}_t$ given in \eqref{eq:Lhat-formula}} :
\begin{align*}
  \widehat{L}_t f (\eta, t) & = \frac1{h(\eta,t)} \Big( \big(L + \tfrac{\partial}{\partial t} \big) h f \Big) (\eta,t) \\
  & = \frac1{h(\eta,t)} \bigg( 
  \sum_{x,y} \eta_x p_{xy} \big( h(\eta^{x\to y},t) f(\eta^{x\to y},t) - h(\eta,t) f(\eta,t) \big) \\
  & \hspace{5em} + \frac{\gamma}{2} \sum_x 1_{\{\eta_x=1\}} \big( h(\eta^{+x},t)f(\eta^{+x},t) + h(\eta^{-x},t)f(\eta^{-x},t) - 
  2 h(\eta,t)f(\eta,t) \big) \notag \\
  & \hspace{6em} + f(\eta,t) \frac{\partial}{\partial t} h(\eta,t) + h(\eta,t) \frac{\partial}{\partial t} f(\eta,t) 
  \bigg) \\
  & = \frac1{h(\eta,t)} \bigg( 
  \sum_{x,y} \eta_x p_{xy} \Big( \big( h(\eta,t) + p_{y,x_0}(T-t) - p_{x,x_0}(T-t) \big) f(\eta^{x\to y},t) - h(\eta,t) f(\eta,t) 
  \Big) \notag \\
  & \hspace{5em} + \frac{\gamma}{2} \sum_x 1_{\{\eta_x=1\}} \Big( \big(h(\eta,t) + p_{x,x_0}(T-t)\big) f(\eta^{+x},t) \\
  & \hspace{13em} + \big(h(\eta,t) - p_{x,x_0}(T-t)\big) f(\eta^{-x},t) - 2 h(\eta,t)f(\eta,t) \Big) \\
  & \hspace{6em} + f(\eta,t) \frac{\partial}{\partial t} h(\eta,t) \bigg) + \frac{\partial}{\partial t} f(\eta,t) \\
  & = \sum_{x,y} \eta_x p_{xy} \bigg( \Big( 1 + \frac{p_{y,x_0}(T-t) - p_{x,x_0}(T-t)}{h(\eta,t)} \Big) f(\eta^{x\to y},t) 
  - f(\eta,t) \bigg) \\
  & \hspace{2em} - f(\eta,t) \sum_x \eta_x \sum_z p_{xz} \frac{p_{z,x_0}(T-t) - p_{x,x_0}(T-t)}{h(\eta,t)} \\
  & \hspace{2em} + \frac{\gamma}{2} \sum_x 1_{\{\eta_x=1\}} \bigg( \Big( 1 + \frac{p_{x,x_0}(T-t)}{h(\eta,t)} \Big) f(\eta^{+x},t) 
  + \Big( 1 - \frac{p_{x,x_0}(T-t)}{h(\eta,t)} \Big) f(\eta^{-x},t) - 2 h(\eta,t)f(\eta,t) \bigg) \\
  & \hspace{2em} + \frac{\partial}{\partial t} f(\eta,t) \\
  & = \sum_{x,y} \eta_x p_{xy} \Big( 1 + \frac{p_{y,x_0}(T-t) - p_{x,x_0}(T-t)}{h(\eta,t)} \Big) 
  \big( f(\eta^{x\to y},t) - f(\eta,t) \big) \\
  & \hspace{2em} + \frac{\gamma}{2} \sum_x 1_{\{\eta_x=1\}} 
  \bigg( \Big( 1 + \frac{p_{x,x_0}(T-t)}{h(\eta,t)} \Big) \big( f(\eta^{+x},t) - f(\eta,t) \big) \\
  & \hspace{10em} + \Big( 1 - \frac{p_{x,x_0}(T-t)}{h(\eta,t)} \Big) \big( f(\eta^{-x},t) - f(\eta,t) \big) \bigg) 
  + \frac{\partial}{\partial t} f(\eta,t) \\
  & = \sum_{x, y\in\Z^d} \eta_x p_{xy}  \Big(1 - s_x(\eta,t) + s_x(\eta,t) \frac{p_{y,x_0}(T-t)}{p_{x,x_0}(T-t)} \Big)  
  \big( f(\eta^{x \to y},t) - f(\eta,t) \big) \notag \\ 
  & \hspace{2em} + \frac{\gamma}{2} \sum_{x} 1_{\{\eta_x=1\}} \Big( \big(1+s_x(\eta,t)\big) \big( f(\eta^{+x},t) - f(\eta,t) \big)
  \notag \\[-2ex] 
  & \hspace{10em} + \big(1-s_x(\eta,t)\big) \big( f(\eta^{-x},t) - f(\eta,t) \big) \Big) 
  + \frac{\partial}{\partial t} f(\eta,t)
\end{align*}
(recall $s_x(\eta,t)$ from \eqref{eq:Lhat-formula.sx}).
\medskip

\noindent
\emph{Proof of \eqref{eq:intLtilde=Lhat} for functions of the form \eqref{eq:projtildeexi=hateta.f-form}} : 
For $f(\xi, z, t) = f_1(\xi,t) 1_{z=z_0}$ we have 
\begin{align*} 
  g(\eta,t) = \int_{\mathbb{S} \times \Z^d} f(\xi,z,t) \, \alpha_t\big(\eta, d(\xi,z)\big) 
  =f_1(\eta,t) \frac{\eta_{z_0} p_{z_0,x_0}(T-t)}{h(\eta,t)} 
  = f_1(\eta,t) \eta_{z_0} s_{z_0}(\eta, t)
\end{align*}
and 
\begin{align} 
  & \int_{\mathbb{S} \times \Z^d} \widetilde{L}_t f(\xi,z,t) \, \alpha_t\big(\eta, d(\xi,z)\big) 
  = \sum_z \frac{\eta_{z} p_{z,x_0}(T-t)}{h(\eta,t)} \widetilde{L}_t f(\eta,z,t) \notag \\
  & = \frac{\eta_{z_0} p_{z_0,x_0}(T-t)}{h(\eta, t)} \sum_{x,y} (\eta_x-\delta_{x z_0}) p_{xy} 
  \big( f_1(\eta^{x\to y},t) - f_1(\eta,t) \big) \notag \\
  & \hspace{1em} + \sum_z \frac{\eta_{z} p_{z,x_0}(T-t)}{h(\eta, t)} \sum_y 
  p_{zy} \frac{p_{y,x_0}(T-t)}{p_{z,x_0}(T-t)} \big( f_1(\eta^{z\to y},t) 1_{y=z_0} - f_1(\eta,t) 1_{z=z_0} \big) \notag \\
  & \hspace{1em} + \sum_z \frac{\eta_{z} p_{z,x_0}(T-t)}{h(\eta, t)} 
  \bigg( \frac{\gamma}{2} \sum_{x \neq z} 1_{\{\eta_x=1\}} \big( f_1(\eta^{+x},t) 1_{z=z_0} 
  + f_1(\eta^{-x},t) 1_{z=z_0} - 2 f_1(\eta,t) 1_{z=z_0} \big) \notag \notag \\[-1ex]
  & \hspace{13em} + \gamma 1_{\eta_z=1} \big( f_1(\eta^{+z},t) 1_{z=z_0} -  f_1(\eta,t) 1_{z=z_0} \big) \bigg) \notag \\
  & \hspace{1em} + \frac{\eta_{z_0} p_{z_0,x_0}(T-t)}{h(\eta, t)} \frac{\partial}{\partial t} f_1(\eta, t) \notag \\
  & = \eta_{z_0} s_{z_0}(\eta,t) \sum_{x,y} (\eta_x-\delta_{x z_0}) p_{xy} \big( f_1(\eta^{x\to y},t) - f_1(\eta,t) \big) \notag \\ 
  & \hspace{1em} + s_{z_0}(\eta,t) \sum_z \eta_z p_{z z_0} f_1(\eta^{z \to z_0},t) 
  - \eta_{z_0} \sum_y p_{z_0 y} s_y(\eta,t) f_1(\eta,t) \notag \\
  & \hspace{1em} + \eta_{z_0} s_{z_0}(\eta,t) \bigg( 
  \gamma 1_{\{\eta_{z_0}=1\}} \big( f_1(\eta^{+z_0},t) -f_1(\eta,t) \big) \notag \notag \\
  & \hspace{8em} 
  + \sum_{x \neq {z_0}} 1_{\{\eta_x=1\}} \big( f_1(\eta^{+x},t) + f_1(\eta^{-x},t) - 2 f_1(\eta,t) \big)\bigg) \notag \\
  & \hspace{1em} + \eta_{z_0} s_{z_0}(\eta,t) \frac{\partial}{\partial t} f_1(\eta, t) \notag \\
  & = \eta_{z_0} s_{z_0}(\eta,t) \bigg( \sum_{x,y} (\eta_x - \delta_{x z_0}) p_{xy} \big( f_1(\eta^{x\to y},t) - f_1(\eta,t) \big) \notag \\
  & \hspace{8em} + \sum_z \frac{\eta_z}{\eta_{z_0}} p_{z z_0} f_1(\eta^{z \to z_0},t) 
  - f_1(\eta, t) \sum_y p_{z_0 y} \frac{p_{y, x_0}(T-t)}{p_{z_0, x_0}(T-t)} \notag \\ 
  & \hspace{8em} + \gamma 1_{\{\eta_{z_0}=1\}} \big( f_1(\eta^{+z_0},t) -f_1(\eta,t) \big) \notag \\ 
  & \hspace{8em} + \sum_{x \neq {z_0}} 1_{\{\eta_x=1\}} \big( f_1(\eta^{+x},t) + f_1(\eta^{-x},t) - 2 f_1(\eta,t) \big)
  + \frac{\partial}{\partial t} f_1(\eta, t) \bigg) .
  \label{eq:proof.intLtilde=Lhat.1}
 \end{align}
On the other side of \eqref{eq:intLtilde=Lhat} we have 
\begin{align*} 
  \widehat{L}_t g(\eta,t) & = \sum_{x,y} \eta_x \Big(1 - s_x(\eta,t) + s_x(\eta,t) \frac{p_{y,x_0}(T-t)}{p_{x,x_0}(T-t)} \Big) p_{xy} \\[-1ex]
  & \hspace{7em} \times \Big( (\eta_{z_0}+\delta_{y z_0} - \delta_{x z_0}) s_{z_0}(\eta^{x\to y},t) f_1(\eta^{x\to y}, t) 
  - \eta_{z_0} s_{z_0}(\eta,t) f_1(\eta,t) \Big) \\
  & \hspace{2em} + \frac{\gamma}{2} \sum_x1_{\{\eta_x=1\}} \bigg( (1+s_x(\eta,t)) \Big( (\eta_{z_0} + \delta_{x,z_0}) 
  s_{z_0}(\eta^{+x},t) f_1(\eta^{+x},t) \\[-2ex] 
  & \hspace{23em} - \eta_{z_0} s_{z_0}(\eta,t) f_1(\eta,t) \Big) \\
  & \hspace{10em} + (1-s_x(\eta,t)) \Big( (\eta_{z_0} - \delta_{x,z_0}) 
  s_{z_0}(\eta^{-x},t) f_1(\eta^{-x},t) \\[-2ex] 
  & \hspace{23.5em} - \eta_{z_0} s_{z_0}(\eta,t) f_1(\eta,t) \Big) \bigg) \\
  & \hspace{1em} 
  + \eta_{z_0} s_{z_0}(\eta,t) \frac{\partial}{\partial t} f_1(\eta,t) 
  + f_1(\eta,t) \eta_{z_0} \frac{\partial}{\partial t} s_{z_0}(\eta,t) \\
  & = \eta_{z_0} s_{z_0}(\eta,t) \sum_{x,y} \eta_x p_{xy} \big(1 - s_x(\eta,t) + s_y(\eta,t)\big) \\[-1ex]
  & \hspace{9em} \times 
  \Big( \frac{(\eta_{z_0}+\delta_{y z_0} - \delta_{x z_0}) s_{z_0}(\eta^{x\to y},t)}{\eta_{z_0} s_{z_0}(\eta,t)} 
  f_1(\eta^{x\to y}, t) - f_1(\eta, t) \Big) \\
  & \hspace{1em} + \eta_{z_0} s_{z_0}(\eta,t) \frac{\gamma}{2} \sum_x1_{\{\eta_x=1\}} \bigg( 
  (1+s_x(\eta,t)) \frac{\eta_{z_0} + \delta_{x,z_0}}{\eta_{z_0}} \frac{s_{z_0}(\eta^{+x},t)}{s_{z_0}(\eta,t)} f_1(\eta^{+x},t) \\
  & \hspace{14em} + (1-s_x(\eta,t)) \frac{\eta_{z_0} - \delta_{x,z_0}}{\eta_{z_0}} \frac{s_{z_0}(\eta^{-x},t)}{s_{z_0}(\eta,t)} f_1(\eta^{-x},t) \\
  & \hspace{14em} -2 f_1(\eta, t) \bigg) \\
  & \hspace{1em} 
  + \eta_{z_0} s_{z_0}(\eta,t) \Big( \frac{\partial}{\partial t} f_1(\eta,t) + f_1(\eta,t) 
  \frac{\frac{\partial}{\partial t} s_{z_0}(\eta,t)}{s_{z_0}(\eta,t)} \Big). 
\end{align*}
Note that 
\begin{align*} 
  & \big(1 - s_x(\eta,t) + s_y(\eta,t)\big) s_{z_0}(\eta^{x\to y},t) \notag \\
  & = \frac{\sum_w \eta_w p_{w x_0}(T-t) - p_{x x_0}(T-t) + p_{y x_0}(T-t)}{\sum_v \eta_v p_{v x_0}(T-t)} 
  \frac{p_{z_0 x_0}(T-t)}{\sum_w \eta_w p_{w x_0}(T-t) - p_{x x_0}(T-t) + p_{y x_0}(T-t)} \notag \\
  & = s_{z_0}(\eta,t)
\end{align*}
and 
\begin{align*} 
  & (1 \pm s_x(\eta,t)) \frac{s_{z_0}(\eta^{\pm},t)}{s_{z_0}(\eta,t)} \\
  & = \frac{p_{x,x_0}(T-t) \pm \sum_w \eta_w p_{w,x_0}(T-t)}{\sum_w \eta_w p_{w,x_0}(T-t)} 
  \frac{p_{z_0,x_0}(T-t) \cdot \sum_v \eta_v p_{v,x_0}(T-t)}{p_{z_0,x_0}(T-t) \cdot \big( p_{x,x_0}(T-t) \pm \sum_u \eta_u p_{u,x_0}(T-t)\big)} 
  = 1 .
\end{align*}
Thus 
\begin{align} 
  & \frac{\widehat{L}_t g(\eta,t)}{\eta_{z_0} s_{z_0}(\eta,t)} \notag \\
  & = \sum_{x,y} \eta_x p_{xy} \Big( \frac{\eta_{z_0} + \delta_{x,z_0}}{\eta_{z_0}} 
  \frac{\eta_{z_0}+\delta_{y z_0} - \delta_{x z_0}}{\eta_{z_0}} f_1(\eta^{x\to y}, t) 
  - \big(1 - s_x(\eta,t) + s_y(\eta,t)\big) f_1(\eta,t) \Big) \notag \\
  & \hspace{1em} + \frac{\gamma}{2} \sum_x 1_{\{\eta_x=1\}} \Big( \frac{\eta_{z_0} + \delta_{x,z_0}}{\eta_{z_0}} f_1(\eta^{+x},t) 
    + \frac{\eta_{z_0} - \delta_{x,z_0}}{\eta_{z_0}} f_1(\eta^{-x},t) -2 f_1(\eta,t) \Big) \notag \\
  & \hspace{1em} 
  + \Big( \frac{\partial}{\partial t} f_1(\eta,t) + f_1(\eta,t) 
  \frac{\frac{\partial}{\partial t} s_{z_0}(\eta,t)}{s_{z_0}(\eta,t)} \Big) \notag \\
  & = \sum_{x,y} \eta_x p_{xy} \frac{\eta_{z_0} - \delta_{x z_0}}{\eta_{z_0}} 
  \big( f_1(\eta^{x\to y}, t) - f_1(\eta,t) \big) \notag \\
  & \hspace{1em} 
  + \sum_x \eta_x p_{x z_0} \frac{1}{\eta_{z_0}} f_1(\eta^{x\to z_0}, t) 
  + \sum_{x, y} \eta_x p_{xy} \Big( s_x(\eta,t) - s_y(\eta,t) - \frac{\delta_{x z_0}}{\eta_{z_0}}\Big) f_1(\eta,t) 
  \notag \\
  & \hspace{1em} + \frac{\gamma}{2} \sum_{x \neq z_0} 1_{\{\eta_x=1\}} \big( f_1(\eta^{+x},t) + f_1(\eta^{-x},t) -2 f_1(\eta,t) \big) 
  + \gamma 1_{\{\eta_{z_0}=1\}} \big( f_1(\eta^{+z_0},t) - f_1(\eta,t) \big) 
  \notag \\
  & \hspace{1em} 
  + \Big( \frac{\partial}{\partial t} f_1(\eta,t) + f_1(\eta,t) 
  \frac{\frac{\partial}{\partial t} s_{z_0}(\eta,t)}{s_{z_0}(\eta,t)} \Big) \notag \\
  & = \sum_{x,y} (\eta_x - \delta_{x z_0}) p_{xy} \big( f_1(\eta^{x\to y}, t) - f_1(\eta,t) \big) 
  + \sum_x \eta_x p_{x z_0} \frac{1}{\eta_{z_0}} f_1(\eta^{x\to z_0}, t) \notag \\
  & \hspace{1em} + \frac{\gamma}{2} \sum_{x \neq z_0} 1_{\{\eta_x=1\}} \big( f_1(\eta^{+x},t) + f_1(\eta^{-x},t) -2 f_1(\eta,t) \big) 
  + \gamma 1_{\{\eta_{z_0}=1\}} \big( f_1(\eta^{+z_0},t) - f_1(\eta,t) \big) \notag \\
  & \hspace{1em} 
  + \frac{\partial}{\partial t} f_1(\eta,t) 
  + f_1(\eta,t) \bigg( \frac{\frac{\partial}{\partial t} s_{z_0}(\eta,t)}{s_{z_0}(\eta,t)} 
  + \sum_{x, y} \eta_x p_{xy} \Big( s_x(\eta,t) - s_y(\eta,t) - \frac{\delta_{x z_0}}{\eta_{z_0}}\Big) \bigg). 
  \label{eq:proof.intLtilde=Lhat.2}
\end{align}
We have 
\begin{align*} 
\frac{\partial}{\partial t} s_{z_0}(\eta,t) & = \frac{\partial}{\partial t} \Big( \frac{p_{z_0 x_0}(T-t)}{\sum_w \eta_w p_{w x_0}(T-t)}\Big) \\
& = \frac{-(\frac{\partial}{\partial t} p_{z_0 x_0})(T-t)}{\sum_w \eta_w p_{w x_0}(T-t)} 
+ \frac{p_{z_0 x_0}(T-t)}{\big( \sum_w \eta_w p_{w x_0}(T-t)\big)^2} 
\sum_v \eta_v \big(\frac{\partial}{\partial t} p_{v x_0}\big)(T-t),
\end{align*}
so
\begin{align*} 
  \frac{\frac{\partial}{\partial t} s_{z_0}(\eta,t)}{s_{z_0}(\eta,t)} 
  & = - \frac{(\frac{\partial}{\partial t} p_{z_0 x_0})(T-t)}{p_{z_0 x_0}(T-t)} 
  + \frac1{\sum_w \eta_w p_{w x_0}(T-t)} \sum_v \eta_v \big(\frac{\partial}{\partial t} p_{v x_0}\big)(T-t) \\
  & = - \sum_{y} p_{z_0 y} \frac{p_{y x_0}(T-t) - p_{z_0 x_0}(T-t)}{p_{z_0 x_0}(T-t)} 
  + \frac{\sum_{v,u} \eta_v p_{vu} \big(p_{u x_0}(T-t) - p_{v x_0}(T-t)\big)}{\sum_w \eta_w p_{w x_0}(T-t)} \\
  & = \sum_{y} p_{z_0 y} - \sum_{y} p_{z_0 y} \frac{p_{y x_0}(T-t)}{p_{z_0 x_0}(T-t)} 
  + \sum_{v,u} \eta_v p_{vu} \big( s_u(\eta,t) - s_v(\eta,t) \big)
\end{align*}
(where we used Kolmogorov's backward equation 
$\frac{\partial}{\partial s} p_{v,x_0}(s) = \sum_u p_{vu} \big( p_{u,x_0}(s) - p_{v,x_0}(s)\big)$)
and 
\begin{align}  
  \frac{\frac{\partial}{\partial t} s_{z_0}(\eta,t)}{s_{z_0}(\eta,t)} 
  + \sum_{x, y} \eta_x p_{xy} & \Big( s_x(\eta,t) - s_y(\eta,t) - \frac{\delta_{x z_0}}{\eta_{z_0}}\Big) 
  \notag \\ 
  \label{eq:proof.intLtilde=Lhat.3}
  & = - \sum_{y} p_{z_0 y} \frac{p_{y x_0}(T-t)}{p_{z_0 x_0}(T-t)} 
  = - \sum_y p_{z_0 y} \frac{s_y(\eta,t)}{s_{z_0}(\eta,t)}
\end{align}
Inserting \eqref{eq:proof.intLtilde=Lhat.3} into \eqref{eq:proof.intLtilde=Lhat.2} 
we obtain  
\begin{align*}
  \frac{\widehat{L}_t g(\eta,t)}{\eta_{z_0} s_{z_0}(\eta,t)} 
  & = \sum_{x,y} (\eta_x - \delta_{x z_0}) p_{xy} \big( f_1(\eta^{x\to y}, t) - f_1(\eta,t) \big) 
  + \sum_x \eta_x p_{x z_0} \frac{1}{\eta_{z_0}} f_1(\eta^{x\to z_0}, t) 
  \\ & \hspace{1em} 
  + \frac{\partial}{\partial t} f_1(\eta,t) 
  - f_1(\eta,t) \sum_{y} p_{z_0 y} \frac{p_{y x_0}(T-t)}{p_{z_0 x_0}(T-t)} 
\end{align*}
and comparing this with \eqref{eq:proof.intLtilde=Lhat.1} yields \eqref{eq:intLtilde=Lhat}. 

\end{appendix}

\bigskip


\noindent {\bf Acknowledgements.} 
R.S.\ is supported by AcRF Tier 1 grant R-146-000-220-112.  M.B.\ is
in part supported by DFG priority programme SPP 1590 Probabilistic
structures in evolution through grant BI 1058/3-2. We thank 
the Institute for Mathematical Sciences, National University of Singapore 
for hospitality and support during the program Genealogies of interacting particle 
systems, where this work was completed. 
M.B.\ would like
to thank Ted Cox, who originally posed the question concerning the
fate of the lonely branching walks, as well as Anton Wakolbinger and
Alison Etheridge for many stimulating discussions and ideas that after
many years' gestation took the form presented here.


\end{document}